\documentclass[a4paper, reqno]{amsart}

\usepackage[latin1]{inputenc}
\usepackage[T1]{fontenc}
\usepackage[english]{babel}
\usepackage{amsmath, amsthm, amssymb, amsfonts}

\usepackage{mathrsfs}
\usepackage{physics}
\usepackage{mathtools}
\usepackage{array}
\usepackage{url}
\usepackage{cite}
\usepackage{enumitem}
\usepackage{etoolbox}
\usepackage{graphicx}
\usepackage{booktabs}

\usepackage{xcolor} 
\usepackage{scrextend}
\usepackage{fancyhdr}
\usepackage{graphicx}
\usepackage[colorlinks, citecolor=red, urlcolor=blue]{hyperref}

\usepackage{pgfplots}
\pgfplotsset{width=9cm,compat=1.18}
\usepgfplotslibrary{fillbetween,external}
\usetikzlibrary{intersections}

\allowdisplaybreaks

\theoremstyle{remark}
\newtheorem{remark}{Remark}
\theoremstyle{plain}
\newtheorem{theorem}{Theorem}
\newtheorem{proposition}{Proposition}
\newtheorem{lemma}{Lemma}

\begin{document}
	
\title[Boundary stabilization of an EBB with axial force and internal delay]{Boundary stabilization of an Euler-Bernoulli beam with axial force and internal delay}

\author{Ben Bakary Junior Siriki}
\address{Universite Nangui Abrogoua, Abidjan, C\^ote d'Ivoire}
\email{benbjsiriki@gmail.com}

\author{Adama Coulibaly}
\address{Universite F\'elix Houphou\"et-Boigny, Abidjan, C\^ote d'Ivoire}
\email{couliba@yahoo.fr}


\begin{abstract}
This paper investigates the boundary stabilization of an Euler-Bernoulli beam under constant axial tension and subject to an internal time-delay. First, the well-posedness of the system is established using semigroup of linear operators theory. Then, by constructing an adequate Lyapunov functional, we establish the exponential stability of the system within a region of the $(\alpha, \xi)$-plane, explicitly characterized by a system of inequalities involving the physical parameters. This work extends the results of Han et al. (Journal of Difference Equations, 2016) by incorporating the effect of axial tension and an internal delay term without sign constraints.
\end{abstract}	

\subjclass{35B40, 93D15, 47D06, 35Q74}

\keywords{Euler-Bernoulli beam equation, internal time delay, axial force, Boundary control, exponential stability}
	
\maketitle

\section{Introduction}

	In this paper, we study the stabilization of a uniform Euler-Bernoulli beam, clamped at one end and subject to a boundary velocity control at its free end. We assume that the beam is under a constant axial tension $T > 0$ and includes an internal delay term. The equations governing the vibratory motion of this structure are given by:
\begin{equation} \label{eq:p1}
	\left\{
	\begin{array}{>{\displaystyle}l}
	y_{tt}(x,t) + y_{xxxx}(x,t) - T y_{xx}(x,t) + \alpha y_t(x, t-\tau) = 0, \\
	\qquad \qquad \qquad \qquad \qquad \qquad \qquad \qquad \qquad \qquad (x,t) \in (0,\ell) \times (0,+\infty), \\
	y(0,t) = y_x(0,t) = 0, \quad t \in (0,+\infty),  \\
	y_{xx}(\ell,t) = 0, \quad t \in (0,+\infty),  \\
  y_{xxx}(\ell,t) -T y_x(\ell,t) = \kappa y_t(\ell,t), \quad t \in (0,+\infty), \\
	y(x,0) = y_0(x), \, y_t(x,0) = y_1(x), \quad x \in [0, \ell], \\
	y_t(x,s) = f_0(x,s), \quad x \in (0, \ell), \, s \in (-\tau, 0),
	\end{array}
	\right.
\end{equation}
where $y(x,t)$ represents the deflection of the beam at position $x$ and time $t$, while $\tau > 0$ denotes the time delay, assumed to be constant. The parameter $\alpha \in \mathbb{R}$ is the coefficient associated with the delay term, and $\kappa > 0$ represents the gain of the boundary control at $x=\ell$. Finally, the functions $y_0, y_1$, and $f_0$ define the initial data of the system.

The stabilization of Euler-Bernoulli beams constitutes a pivotal issue in control theory and continues to attract significant attention owing to its wide-ranging technological applications (see \cite{du2022, kundu2017, chakravarthy2010, turner2001, luo1993, balakrishnan1986}). 
To address the precision requirements of modern engineering systems, various stabilization strategies have been explored in recent decades. 
Among them, the contributions of Guo et al. \cite{guo2001} and Dadfarnia et al. \cite{dadfarnia2004} stand out as representative of two distinct and fundamental approaches in the literature, both applied to uniform flexible beams with a tip mass. 
To achieve exponential stabilization, Guo et al. \cite{guo2001} employ a frequency-domain approach based on the asymptotic expansion of generalized eigenvalues and eigenvectors of the linear operator associated with the system, as well as the Riesz basis property. For further contributions based on this approach, see for instance \cite{benamara2025, benamara2025a, teya2023, nicaise2016, wang2013, shang2012}. Conversely, Dadfarnia et al. \cite{dadfarnia2004} favor a time-domain approach based on the direct Lyapunov method. For more details on the application of this method to beam stabilization, we refer the reader to \cite{akil2025, ledkim2023, camasta2024, siriki2025, siriki2025a, guo2022, camasta2025, parada2022, feng2021, han2016}. 

However, the question of stabilization remains largely unexplored for beams models incorporating axial forces (see \cite{siriki2025, siriki2025a, benamara2025, benamara2025a, ledkim2023}), in comparison to the abundant literature dealing with beams in the absence of such constraints (see \cite{boukhari2024, benterki2022, wang2020, teya2023, guo2001, akil2025, camasta2024, shang2012}). 
Accounting for such forces is nevertheless vital, as they can profoundly alter the system's dynamics, potentially triggering to instability.

Furthermore, the study of time-delay effects on dynamical systems remains a subject of great interest within the research community, as they allow for more realistic mathematical models and meet numerous practical requirements. Inherent to any realistic physical system, these delays can cause system destabilization, even when they are arbitrarily small (see \cite{datko1991, datko1993}). To address this issue, several approaches have been developed depending on the location of the delay within the system: one may consult, for instance, \cite{siriki2025, siriki2025a, guo2022, nicaise2016, wang2013, shang2012} for input delays, \cite{yang2019, jilong2017, yang2015} for output delays, and finally \cite{ camasta2025, parada2022, feng2021, akil2021, han2016,  ammari2010} for internal delays. 

However, to the best of our knowledge, the question of exponential stabilization of an Euler-Bernoulli beam, simultaneously subjected to an axial force and a constant internal delay, remains an open problem. 
Our work fits into this framework and aims to extend the stability results obtained by Han et al. \cite{han2016}. More precisely, we treat the case of a an Euler-Bernoulli beam under constant axial tension, in which the coefficient $\alpha$ associated with the delayed velocity term is not subject to any sign constraint. 
Our strategy consists, first, in formulating the closed-loop system \eqref{eq:p1} as a linear evolution problem in an appropriate state space based on Hilbert spaces. The L\"umer-Phillips theorem guarantees that the system is well-posed within the framework of semigroup theory \cite{engel2000, pazy1983} . 
Next, to establish the exponential stability of the model, we employ a Lyapunov-based approach, building upon the framework developed in \cite{feng2021, ammari2010}. Specifically, we construct a suitable Lyapunov functional, which we prove to be equivalent to the energy of the system \eqref{eq:p1}. Finally, a rigorous estimation of its time derivative allows us to derive sufficient conditions guaranteeing the exponential decay of the energy within an explicitly described $(\alpha, \xi)-$region. 

The remainder of this article is organized as follows: Section 2 addresses the well-posedness of system \eqref{eq:p1} using the theory of linear operator semigroups. Subsequently, Section 3 is devoted to the proof of the exponential stability of the system, relying on the construction of an appropriate Lyapunov functional. Finally, Section 4 concludes this work by suggesting several avenues for further research.

\section{Well-posedness of the system: a semigroup approach}

In this section, we will study the existence, uniqueness and regularity of the solution of system \eqref{eq:p1}, using a semigroup approach. First, we will rewrite system \eqref{eq:p1} as an abstract linear Cauchy problem into a tailored Hilbert state space. Next, by using L\"umer-Phillips, we will prove the existence of a unique solution to problem \eqref{eq:p1}.

	\subsection{Abstract Cauchy problem formulation}

To reformulate system \eqref{eq:p1} as an abstract linear Cauchy problem in an appropriate Hilbert space, we first introduce an auxiliary variable. Let
\begin{gather} \label{eq:1}
u(x,s,t) := y_t(x, t-\tau s), \quad (x,s,t) \in (0,\ell) \times (0,1) \times (0, +\infty).
\end{gather}
It follows that $u$ satisfies the system:
\begin{equation} \label{eq:2}
	\left\{
	\begin{array}{>{\displaystyle}l}
	\tau u_t(x,s,t) + u_s(x,s,t) = 0, \quad (x,s,t) \in (0,\ell) \times (0,1) \times (0,+\infty), \\
	u(x,0,t) = y_t(x, t), \quad (x,t) \in (0,\ell) \times (0,+\infty), \\
	u(x,1,t) = y_t(x, t-\tau), \quad (x,t) \in (0,\ell) \times (0,+\infty), \\
	u(x,s,0) = f_0(x, -\tau s), \quad (x,s) \in [0, \ell] \times (0,1).
	\end{array}
	\right.
\end{equation}
Thus, the original system \eqref{eq:p1} is equivalent to the following augmented system:
\begin{equation} \label{eq:p2}
	\left\{
	\begin{array}{>{\displaystyle}l}
	 y_{tt}(x,t) + y_{xxxx}(x,t) - T y_{xx}(x,t) + \alpha u(x,1,t) = 0, \\
	\qquad \qquad \qquad \qquad \qquad \qquad \qquad \qquad \quad (x,t) \in (0,\ell) \times (0,+\infty), \\
	\tau u_t(x,s,t) + u_s(x,s,t) = 0, \quad (x,s,t) \in (0,\ell) \times (0,1) \times (0,+\infty), \\
	y(0,t) = y_x(0,t) = 0, \quad t \in (0, +\infty),  \\
	u(x,0,t) = y_t(x, t), \quad (x,t) \in (0, \ell) \times (0,+\infty), \\
	y_{xx}(\ell,t) = 0, \quad t \in (0, +\infty), \\
	y_{xxx}(\ell,t) - T y_x(\ell,t) = \kappa y_t(\ell,t), \quad t \in (0,+\infty), \\
	u(x,1,t) = y_t(x, t-\tau), \quad (x,t) \in (0, \ell) \times (0,+\infty), \\
	y(x,0) = y_0(x), \, y_t(x,0) = y_1(x), \quad x \in [0, \ell], \\
	u(x,s,0) = f_0(x, -\tau s), \quad (x,s) \in [0, \ell] \times (0,1).
	\end{array}
	\right.
\end{equation}
Next, to set up the appropriate framework for our analysis, we introduce the following functional spaces Let $H^2_E(0,\ell) := \left\{ y \in H^2(0,\ell): \, y(0) = y'(0) = 0 \right\}$. The state space $\mathscr{H}$ is then defined as:
\begin{gather} \label{eq:4}
\mathscr{H} := H^2_E(0,\ell) \times L^2(0,\ell) \times L^2\big((0,\ell) \times (0,1)\big).
\end{gather}
We endow $\mathscr{H}$ with the inner product
\begin{align} \label{eq:5}
\begin{split}
\prec Y_1, Y_2 \succ & := 
\int_{0}^{\ell} v_1(x) \, v_2(x) \, dx + T \int_{0}^{\ell} y_1'(x) \, y_2'(x) \, dx + \int_{0}^{\ell} y_1''(x) \, y_2''(x) \, dx 
\\
& \quad + \, \xi \int_{0}^{\ell} \int_{0}^{1} u_1(x,s) \, u_2(x,s) \, ds \, dx,
\end{split}
\end{align}
for all $Y_i := (y_i, v_i, u_i) \in \mathscr{H}, \, i=1, \, 2$, where $\xi$ a positive constant. With this inner product, $\mathscr{H}$ forms a Hilbert space, and its associated norm is denoted by
$\lVert \cdot \rVert_{\mathscr{H}}$. 
Now, we consider the linear operator $\mathbb{A} : \mathscr{D}(\mathbb{A}) \subset \mathscr{H} \to \mathscr{H}$ defined by:
\begin{gather} \label{eq:6}
\mathscr{D}(\mathbb{A}) := 
\left\{
Y = 
\begin{pmatrix}
y \\ v \\ u
\end{pmatrix} 
\in \mathscr{H} : 
\begin{array}{l}
y \in H^4(0,\ell), \, v \in H^2_E(0,\ell), \\
u \in L^2\left( 0, \ell; \, H^1(0,1) \right), \,v = u(\cdot,0) \\
y''(\ell) = 0, \, y^{(3)}(\ell) -T y'(\ell) = \kappa v(\ell) 
\end{array}
\right\}, \\ \label{eq:6a}
\mathbb{A} Y
:= \begin{pmatrix}v \\ -y^{(4)} + T y'' - \alpha u(\cdot,1) \\ -\tau^{-1} u_s(\cdot,s) \end{pmatrix}.
\end{gather}
Based on the above considerations, system \eqref{eq:p2} is recast as the following abstract Cauchy problem:
\begin{equation} \label{eq:7}
	\left\{
	\renewcommand{\arraystretch}{1.3}
	\begin{array}{>{\displaystyle}l}
	 \dfrac{dY(t)}{dt} = \mathbb{A} Y(t), \; t \in (0, \infty), \\
	 Y(0) = Y_0,
	\end{array}
	\right.
\end{equation}
where $Y(t) := \big( y(\cdot,t), \, v(\cdot,t), \, u(\cdot,t) \big)$ and $Y_0 := \Big( y_0, \, y_1, \, f_0(\cdot,-s\tau) \Big), \; s \in (0, 1)$.

  	\subsection{Existence and uniqueness result}

Before stating the first main result of this paper, we establish some fundamental properties of the operator $\mathbb{A}$ in the following lemmas.	

\begin{lemma} \label{l1}
The linear operator $\mathbb{A}$ given by \eqref{eq:6}--\eqref{eq:6a} is closed and its domain $\mathscr{D}(\mathbb{A})$ is dense in the Hilbert space $\mathscr{H}$.
\end{lemma}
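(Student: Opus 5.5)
Density and closedness are handled separately. For closedness, the cleanest route is to show that $\lambda I-\mathbb{A}$ is a bijection from $\mathscr{D}(\mathbb{A})$ onto $\mathscr{H}$ with bounded inverse for some $\lambda$ (a resolvent point forces closedness). This is also what the Lümer--Phillips argument will need later. A direct sequence argument works too, and we sketch that first since it is elementary.

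\textbf{Closedness via sequences.} Take $Y_n=(y_n,v_n,u_n)\in\mathscr{D}(\mathbb{A})$ with $Y_n\to Y=(y,v,u)$ and $\mathbb{A}Y_n\to F=(f,g,h)$ in $\mathscr{H}$. We argue componentwise.

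First, $v_n\to f$ in $H^2_E$ and $v_n\to v$ in $L^2$, so $v=f\in H^2_E(0,\ell)$.

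Next, $-\tau^{-1}\partial_s u_n\to h$ in $L^2((0,\ell)\times(0,1))$ and $u_n\to u$ in the same space. Hence $u_n\to u$ in $L^2(0,\ell;H^1(0,1))$ and $\partial_s u=-\tau h$. The trace $s\mapsto u(\cdot,s)$ is continuous from $L^2(0,\ell;H^1(0,1))$ into $L^2(0,\ell)$ at $s=0$ and $s=1$. This gives $u(\cdot,0)=\lim v_n=v$ and $u_n(\cdot,1)\to u(\cdot,1)$ in $L^2$.

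Then $y_n^{(4)}-Ty_n''=-(g_n+\alpha u_n(\cdot,1))$ converges in $L^2$, and $y_n\to y$ in $H^2$. Therefore $y_n^{(4)}$ converges in $L^2$, so $y_n\to y$ in $H^4(0,\ell)$ and $-y^{(4)}+Ty''-\alpha u(\cdot,1)=g$.

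Finally, the boundary conditions at $x=\ell$ pass to the limit by continuity of point evaluations on $H^4$ and $H^2$. We get $y''(\ell)=0$ and $y'''(\ell)-Ty'(\ell)=\kappa v(\ell)$, because $v_n(\ell)\to v(\ell)$ in $H^2$. Thus $Y\in\mathscr{D}(\mathbb{A})$ and $\mathbb{A}Y=F$.

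\textbf{Density.} We show that the orthogonal complement of $\mathscr{D}(\mathbb{A})$ in $\mathscr{H}$ is trivial, using well-chosen test elements. Let $Z=(z,w,\phi)\perp\mathscr{D}(\mathbb{A})$.

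\emph{Step 1.} Take $Y=(0,0,u)$ with $u\in C_c^\infty((0,\ell)\times(0,1))$. This lies in the domain, since $v=0=u(\cdot,0)$ and all boundary conditions hold trivially. It gives $\phi=0$.

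\emph{Step 2.} Take $Y=(0,v,u)$ with $v\in C_c^\infty(0,\ell)$ and $u(x,s)=v(x)\theta(s)$, where $\theta(0)=1$ and $\theta$ is arbitrary. Here $v(\ell)=0$, so the condition $y'''(\ell)-Ty'(\ell)=\kappa v(\ell)$ holds with $y=0$. Since $\phi=0$, we get $\int wv=0$ for all such $v$, hence $w=0$.

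\emph{Step 3.} Take $Y=(y,0,0)$ with $y\in H^4\cap H^2_E$, $y''(\ell)=0$ and $y'''(\ell)-Ty'(\ell)=0$. Orthogonality gives $T\int z'y'+\int z''y''=0$ for all such $y$.

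The main obstacle is concluding $z=0$ in Step 3. The set of admissible $y$ contains $C_c^\infty(0,\ell)$. More strongly, it is dense in $H^2_E$, since one can correct any $H^4\cap H^2_E$ function near $x=\ell$ by a small perturbation fixing the two boundary values $y''(\ell)$ and $y'''(\ell)-Ty'(\ell)$. The bilinear form $T\int z'y'+\int z''y''$ is an inner product equivalent to the $H^2_E$ one (Poincaré, clamping at $0$). So $z$ is orthogonal to a dense subspace, hence $z=0$.

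The key detail is the density of this constrained subspace. We would prove it by adding $\epsilon$-small multiples of cutoff polynomials supported in $(\ell-\delta,\ell]$ to adjust $y''(\ell)$ and $y'''(\ell)$. These perturbations have $H^2$-norm tending to zero as $\delta\to0$ when suitably scaled; this scaling is where care is needed.

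Thus $Z=0$, and $\mathscr{D}(\mathbb{A})$ is dense in $\mathscr{H}$.
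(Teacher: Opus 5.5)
Your proof is correct and has the same two-part structure as the paper: a direct sequence argument for closedness, and a trivial-orthogonal-complement (Hahn--Banach) argument with explicit test elements for density. It is more careful than the paper's version in three places. Your test elements $(0,v,v\theta)$ and $(y,0,0)$ with $y''(\ell)=0$, $y'''(\ell)-Ty'(\ell)=0$ genuinely belong to $\mathscr{D}(\mathbb{A})$, whereas the paper's $(0,g,0)$ violates $v=u(\cdot,0)$ and its $(f,0,0)$ ignores the conditions at $x=\ell$; your scaled-cutoff density argument is exactly what that second point requires. Your closedness argument also explicitly recovers $u(\cdot,0)=v$ through trace continuity, a step the paper omits.
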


\begin{proof}
First, to establish that the linear operator $\mathbb{A}$ is densely defined in $\mathscr{H}$, we invoke a corollary of the geometric form of the Hahn-Banach theorem \cite[Corollary 1.8 p.47]{haim1983}. Let $Y = (y, v, u) \in \mathscr{H}$ and assume that $\prec Y, (f, g, h) \succ = 0$ for all $(f, g, h) \in \mathscr{D}\big(\mathbb{A}\big)$.  By the definition of the inner product in $\mathscr{H}$, this orthogonality condition explicitly reads:
\begin{equation} \label{eq:8}
\begin{aligned}
& \int_{0}^{\ell} \Big( v(x) g(x) + T y'(x) f'(x) + y''(x) f''(x) \Big) \, dx \\
& + \, \xi \int_{0}^{\ell} \int_{0}^{1} u(s,x) h(x,s) \, ds \, dx = 0.
\end{aligned}
\end{equation}
Choosing $f = g = 0$ and $h \in \mathcal{D}\big( (0,\ell)\times(0,1) \big)$, we find that:
\begin{align} \label{eq:9}
\int_{0}^{\ell} \int_{0}^{1} u(s,x) h(x,s) \, ds \, dx = 0.
\end{align}
It follows that $u = 0$ a.e. in $(0,\ell) \times (0,1)$, which implies $u = 0$ in $L^2\big( (0,\ell)\times(0,1) \big)$ by the density of test functions space $\mathcal{D}\big( (0,\ell)\times(0,1)\big)$ in $L^2\big( (0,\ell)\times(0,1) \big)$. 
Similarly, choosing $f=0, \, h=0$ and $g \in \mathcal{D}(0,\ell)$, the orthogonality condition reduces to:
\begin{equation} \label{eq:9a}
\int_{0}^{\ell} v(x) g(x) \, dx = 0.
\end{equation}
Due to density of $\mathcal{D}(0,\ell)$ in $L^2(0,\ell)$, it follows immediately that $v=0$ in $L^2(0,\ell)$.
In view of the fact that $u = v = 0$, the initial relation \eqref{eq:8} simplifies to:
\begin{gather} \label{eq:10}
\int_{0}^{\ell} \Big( y''(x) f''(x) + T y'(x) f'(x) \Big) \, dx = 0.
\end{gather}
Note that $(f, 0, 0) \in \mathscr{D}(\mathbb{A})$ if and only if $f \in H^2_E(0,\ell) \cap H^4(0,\ell)$.Since this intersection space is dense in $H^2_E(0,\ell)$ (equipped with its natural inner product $\prec \cdot, \cdot \succ_{H^2_E(0,\ell)}$) yields $y=0$. Hence, $Y = 0$ in $\mathscr{H}$.

Next, we show that the operator $\mathbb{A}$ is closed. To this end, let $(Y_n)_{n\in\mathbb{N}} \subset \mathscr{D}(\mathbb{A})$ be a sequence such that $Y_n \to Y$ and $\mathbb{A}Y_n \to F$ in $\mathscr{H}$ as $n \to \infty$. We must show that $Y \in \mathscr{D}(\mathbb{A})$ and $\mathbb{A}Y = F$. More explicitly, setting $Y_n = (y_n, v_n, u_n)$, $Y = (y, v, u)$, and $F = (f_1, f_2, f_3)$, the convergence relations mean that:
\begin{eqnarray} 
\begin{gathered} \label{eq:11}
y_n \underset{n \to \infty}{\longrightarrow} y \text{ in } H^2_E(0,\ell), \; v_n \underset{n \to \infty}{\longrightarrow} v  \text{ in } L^2(0,\ell) \; \text{and } u_n \to u \text{ in } L^2\big((0,\ell) \times (0,1)\big), 
\end{gathered} \\
\begin{gathered} \label{eq:12}
v_n \underset{n \to \infty}{\longrightarrow} f_1 \text{ in } H^2_E(0,\ell), \; -y_n^{(4)} + T y_n'' - \alpha u_n(\cdot, 1) \underset{n \to \infty}{\longrightarrow} f_2 \text{ in } L^2(0,\ell), \\
\text{ and } \tau^{-1} (u_n)_s \underset{n \to \infty}{\longrightarrow} -f_3  \text{ in } L^2\big((0,\ell) \times (0,1)\big).
\end{gathered}
\end{eqnarray}
Since $H^2_E(0,\ell)$ is a linear subspace of $L^2(0,\ell)$, we obtain, by unicity of limit, that $v = f_1 \in H^2_E(0,\ell)$.
\newline
Furthermore, from the convergence of $(u_n)_{n\in\mathbb{N}}$ to $u$ in $L^2\big((0,\ell)\times(0,1)\big)$, there exists a subsequence $(u_{n_k})_{k\in\mathbb{N}}$ such that:
\begin{gather} \label{eq:13}
u_{n_{k}} \underset{k \to + \infty}{\longrightarrow} u \; \text{ a.e. in } (0,\ell) \times (0,1),
\\
\big| u_{n_{k}} \big| \leqslant h \; \text{ a.e. in } (0,\ell) \times (0,1),
\end{gather}
with some function $h \in L^2\big((0,\ell) \times (0,1)\big)$. Then, we get:
\begin{gather} \label{eq:14}
u_{n_{k}} \varphi_s \underset{k \to + \infty}{\longrightarrow} u \varphi_s  \; \text{ a.e. in } (0,\ell) \times (0,1),
\\
\big| u_{n_{k}} \varphi_s \big| \leqslant \Big( \underset{(x,s) \in (0,\ell)\times(0,1)}{\sup} \big| \varphi_s(x,s) \big| \Big) h \; \text{ a.e. in } (0,\ell) \times (0,1).
\end{gather}
Applying Lebesgue's dominated convergence Theorem, we obtain:
\begin{align} 
\int_{0}^{\ell} \int_{0}^{1} (u_{n_{k}})_s(x,s) \varphi(x,s) \, ds \, dx  & = -\int_{0}^{\ell} \int_{0}^{1} u_{n_{k}}(x,s) \varphi_s(x,s) \, ds \, dx \nonumber
\\
\begin{split} \label{eq:15}
\int_{0}^{\ell} \int_{0}^{1} (u_{n_{k}})_s(x,s) \varphi(x,s) \, ds \, dx  & \underset{k \to + \infty}{\longrightarrow} - \int_{0}^{\ell} \int_{0}^{1} u(x,s) \varphi_s(x,s) \, ds \, dx 
\\
& \qquad \qquad = \int_{0}^{\ell} \int_{0}^{1} u_s(x,s) \varphi(x,s) \, ds \, dx,
\end{split}
\end{align}
for all $\varphi \in \mathcal{D}((0,\ell) \times (0,1))$. Consequently, $(u_{n_{k}})_s \to u_s$ a.e. in $(0,\ell) \times (0,1)$. By the density of $\mathcal{D}\big((0,\ell) \times (0,1)\big)$ in $L^2\big((0,\ell) \times (0,1)\big)$ and the uniqueness of the limit, we readily deduce that $-\tau^{-1} (u_{n_{k}})_s \to -\tau^{-1} u_s = f_3$ in $L^2\big((0,\ell) \times (0,1)\big)$, which implies that $u \in L^2\big(0,\ell; H^1(0,1)\big)$.
In addition, since $y_n \to y$ in $H^2_E(0,\ell)$, it follows that $\displaystyle y_n'' \underset{n \to +\infty}{\longrightarrow} y'', \, y_n'\underset{n \to +\infty}{\longrightarrow} y'$ and $\displaystyle y_n \underset{n \to +\infty}{\longrightarrow} y$ in $L^2(0,\ell)$. Consequently, by a similar subsequence extraction argument, there exists a subsequence  $(y_{n_k})_{k\in\mathbb{N}}$ such that:
\begin{align} \label{eq:19}
\prec y_{n_{k}}'' - y'', \varphi'' \succ_{L^2(0,\ell)} - T \prec y_{n_{k}} - y, \varphi'' \succ_{L^2(0,\ell)} \underset{k \to +\infty}{\to} 0, \nonumber \\
\prec \Big( y_{n_{k}}^{(4)} - T y_{n_{k}}'' \Big) - \Big( y^{(4)} - T y'' \Big), \varphi \succ_{L^2(0,\ell)} \underset{k \to +\infty}{\longrightarrow} 0, 
\end{align}
for all $\varphi \in \mathcal{D}(0,\ell)$, after a double integration by parts. The density of $\mathcal{D}(0,\ell)$ in $L^2(0,\ell)$ implies that $y_{n_{k}}^{(4)} - T y_{n_{k}}'' \underset{k \to +\infty}{\longrightarrow} y^{(4)} - T y''$ in $L^2(0,\ell)$. Hence, $y \in H^4(0,\ell)$. 
To verify that $y$ satisfies the boundary conditions, we identify $y$ with its continuous representative in $\mathcal{C}^3[0,\ell]$ via the Sobolev embedding theorem, and apply the sequential characterization of limits. Thus, $Y \in \mathscr{D}(\mathbb{A})$. 
\newline
Furthermore, for almost every $x \in (0,\ell)$, the map $s \mapsto u(x,s)$ belongs to $H^1(0,1)$. Since $H^1(0,1)$ is continuously embedded in $\mathcal{C}[0,1]$, identifying $u$ with its continuous representative with respect to $s$ immediately yields $u_n(\cdot,1) \to u(\cdot,1)$ as $n \to +\infty$.
Hence, $\displaystyle -y_{n_{k}}^{(4)} + T y_{n_{k}}'' - \alpha u_{n_{k}}(\cdot,1)$ converges to $ f_2 = -y^{(4)} + T y'' - \alpha u(\cdot,1)$ in $L^2(0,\ell)$ as  $k \to +\infty$. It follows immediately that $F = \mathbb{A} Y$, thereby completing the proof.
\end{proof}

\begin{lemma} \label{l2}
The linear operator $\mathbb{A}$, defined by \eqref{eq:6}--\eqref{eq:6a}, is invertible and its inverse $\mathbb{A}^{-1}$ is compact on $\mathscr{H}$.
\end{lemma}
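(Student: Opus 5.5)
We solve $\mathbb{A}Y=F$ explicitly for arbitrary $F=(f_1,f_2,f_3)\in\mathscr{H}$, show the solution is unique and depends continuously on $F$, and then examine each component of $\mathbb{A}^{-1}F$ for compactness through Sobolev embeddings. Writing $Y=(y,v,u)$, the equation $\mathbb{A}Y=F$ reads
\begin{equation*}
v=f_1,\qquad -y^{(4)}+Ty''-\alpha u(\cdot,1)=f_2,\qquad -\tau^{-1}u_s=f_3 .
\end{equation*}
The first equation gives $v=f_1\in H^2_E(0,\ell)$. With the coupling $u(\cdot,0)=v$, the transport equation integrates to
\begin{equation*}
u(x,s)=f_1(x)-\tau\int_0^s f_3(x,r)\,dr ,
\end{equation*}
so $u\in L^2(0,\ell;H^1(0,1))$. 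In particular $u(\cdot,1)=f_1-\tau\int_0^1 f_3(\cdot,r)\,dr\in L^2(0,\ell)$, with norm controlled by $\|F\|_{\mathscr{H}}$ (Cauchy--Schwarz in $r$).

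It remains to solve the fourth-order boundary value problem
\begin{equation*}
y^{(4)}-Ty''=g:=-f_2-\alpha u(\cdot,1),\qquad y(0)=y'(0)=0,\quad y''(\ell)=0,\quad y'''(\ell)-Ty'(\ell)=\kappa f_1(\ell).
\end{equation*}
I would treat it variationally on $H^2_E(0,\ell)$ with the bilinear form $a(y,\varphi)=\int_0^\ell (y''\varphi''+Ty'\varphi')\,dx$. This form is coercive because $T>0$ and, by the Poincaré-type inequality on $H^2_E$, $\int_0^\ell|y''|^2$ already controls the $H^2$ norm. Integrating by parts twice, the inhomogeneous natural boundary condition produces the linear functional
\begin{equation*}
L(\varphi)=\int_0^\ell g\varphi\,dx-\kappa f_1(\ell)\varphi(\ell),
\end{equation*}
which is bounded on $H^2_E(0,\ell)$ since $H^2\hookrightarrow C[0,\ell]$. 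The Lax--Milgram theorem then gives a unique weak solution $y\in H^2_E(0,\ell)$. Elliptic regularity, read directly from the ODE, yields $y\in H^4(0,\ell)$ together with the two natural boundary conditions. Hence $Y\in\mathscr{D}(\mathbb{A})$.

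Uniqueness follows by setting $F=0$. Then $v=0$ and $u=0$, and testing the $y$-equation against $y$ gives $a(y,y)=0$, so $y=0$. Collecting the estimates,
\begin{equation*}
\|y\|_{H^4}+\|v\|_{H^2}+\|u\|_{L^2(0,\ell;H^1(0,1))}\le C\|F\|_{\mathscr{H}} ,
\end{equation*}
so $0\in\rho(\mathbb{A})$ and $\mathbb{A}^{-1}$ is bounded (consistent with Lemma \ref{l1}, $\mathbb{A}$ being closed).

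For compactness I would take a bounded sequence $(F_n)$ and extract convergent subsequences component by component. The $y$-components are bounded in $H^4(0,\ell)$, and $H^4\hookrightarrow H^2$ compactly by Rellich--Kondrachov. The $v$-components are bounded in $H^2$, and $H^2\hookrightarrow L^2$ compactly. For the $u$-components I would use the decomposition $u_n=f_{1,n}-\tau\int_0^s f_{3,n}\,dr$. The first piece converges along a subsequence, since $f_{1,n}$ is bounded in $H^2_E$ and hence precompact in $L^2$. For the second piece I would attempt Aubin--Lions or Fréchet--Kolmogorov on $(0,\ell)\times(0,1)$: the maps are equicontinuous in $s$, because the $H^1$ bound in $s$ is uniform, so what must be shown is uniform control of translations in $x$.

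\textbf{Main obstacle.} This last step, the $x$-translations of $\int_0^s f_{3,n}(x,r)\,dr$, is where I expect the proof to fail as stated. Nothing in the domain gives regularity of $u$ in $x$ beyond $L^2$, and $f_{3,n}$ is an arbitrary bounded sequence in $L^2((0,\ell)\times(0,1))$. For example, $f_{3,n}(x,r)=\sin(nx)$ (with $f_{1,n}=f_{2,n}=0$) produces the $u$-components $-\tau s\sin(nx)$, which are bounded but have no convergent subsequence. If this check is confirmed, the compactness part of the statement cannot be proved without modification. I would then retain only the resolvent conclusion $0\in\rho(\mathbb{A})$ for the semigroup argument, or add $x$-regularity to the $u$-component of the state space, for instance $L^2(0,1;H^2_E(0,\ell))$.
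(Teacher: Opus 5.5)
Your analysis is right: the invertibility half of the lemma holds, but the compactness half is false, and your counterexample settles it. Take $F_n=(0,0,\sin(nx))$. Then $\lVert F_n\rVert_{\mathscr{H}}^2=\xi\int_0^\ell\sin^2(nx)\,dx\leqslant\xi\ell$, and $\mathbb{A}^{-1}F_n=(y_n,0,u_n)$ with $u_n(x,s)=-\tau s\sin(nx)$; here $y_n$ solves the boundary value problem with right-hand side $\alpha\tau\sin(nx)$. By Riemann--Lebesgue, $u_n\rightharpoonup 0$ in $L^2\big((0,\ell)\times(0,1)\big)$, while $\lVert u_n\rVert\to\tau\sqrt{\ell/6}\neq 0$. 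So no subsequence of $\mathbb{A}^{-1}F_n$ converges in $\mathscr{H}$.

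The paper's proof breaks exactly where you predicted. It asserts that $\mathscr{D}(\mathbb{A})$ is compactly embedded in $\mathscr{H}$ ``by the Rellich--Kondrachov theorem''. But the third component of the domain only lies in $L^2\big(0,\ell;H^1(0,1)\big)$. That space has no regularity in $x$ and is not compactly embedded in $L^2\big((0,\ell)\times(0,1)\big)$.

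For invertibility, your route differs from the paper's. The paper solves $y^{(4)}-Ty''=-\psi_\alpha(f,g,h)$ explicitly by variation of constants and checks that the $4\times 4$ system for $c_1,\dots,c_4$ has nonzero determinant. You use Lax--Milgram with $a(y,\varphi)=\int_0^\ell(y''\varphi''+Ty'\varphi')\,dx$, which is exactly the $H^2_E$ part of the inner product of $\mathscr{H}$, and then read off $H^4$ regularity and the natural boundary conditions. Your argument is shorter and gives $\lVert\mathbb{A}^{-1}F\rVert\leqslant C\lVert F\rVert$ directly rather than via the closed graph theorem. It also avoids the slips in the explicit computation: there $\pi(\varrho)=\varrho^3-T\varrho$ vanishes identically because $\varrho=\sqrt{T}$, and the constant $P$ in $\Delta_c$ is never defined. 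The correct determinant is $-2T\varrho\cosh(\varrho\ell)\neq 0$, so the paper's conclusion survives. The explicit route only buys a closed-form representation of $y$, which is not needed.

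Two caveats about your proposed repairs.

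\textbf{Keeping only $0\in\rho(\mathbb{A})$.} This alone cannot replace the Fredholm step in Case 2 of Theorem~\ref{th1}. Lumer--Phillips needs $\mu I-\mathbb{A}$ to be onto for some $\mu>m_\alpha$. A resolvent point at $-m_\alpha<0$ of the dissipative operator $\mathbb{A}-m_\alpha I$ does not by itself give one in the right half-plane. The simple fix is to run the Lax--Milgram argument of Case 1 with $\mu>\max\{m_\alpha,|\alpha|\}$. Then $q^1_{\alpha,\tau}(\mu)\geqslant\mu(\mu-|\alpha|)>0$ whatever the sign of $\alpha$, so the case split is unnecessary.

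\textbf{Adding $x$-regularity to the $u$-component.} A state space such as $L^2(0,1;H^2_E(0,\ell))$ would not restore compactness. The third component of $\mathbb{A}^{-1}F$ is $f_1-\tau\int_0^s f_3\,dr$, which gains regularity only in $s$. Choosing $f_{3,n}=\phi_n(x)$ with $(\phi_n)$ orthonormal in the $x$-space reproduces your counterexample.

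The non-compactness is structural for this delay formulation, so the lemma should be stated as bounded invertibility only. Theorem~\ref{th2} never uses compactness, so the exponential stability result is unaffected.
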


\begin{proof}
First, we prove that the linear operator $\mathbb{A}$ is invertible. To this end, let $F = (f, g, h) \in \mathscr{H}$ and consider the equation $\mathbb{A}Y = F$ for $Y = (y, v, u) \in \mathscr{D}(\mathbb{A})$. This equation is equivalent to the following system:
\begin{equation} \label{eq:20}
	\left\{
	\begin{array}{>{\displaystyle}l}
	v = f,  \\
	-y^{(4)} + Ty'' - \alpha u(\cdot,1) = g, \\
	- \tau^{-1} u_s(\cdot, s) = h(\cdot,s).
	\end{array}
	\right.
\end{equation}
The first equation in the above system immediately yields $v = f$, which allows us to integrate the equation $\eqref{eq:20}_3$ explicitly as:
\begin{align} \label{eq:21}
u(\cdot,s) = - \tau \int_{0}^{s} h(\cdot,r) \, dr + f,
\end{align}
in $(0,1)$. Inserting this expression in the equation $\eqref{eq:20}_2$, we obtain:
\begin{align}  \label{eq:22}
-y^{(4)} + Ty'' = g + \alpha \left( f - \tau \int_{0}^{1} h(\cdot,r) \, dr \right) \nonumber
\\
y^{(4)} - Ty'' + \psi_\alpha(f,g,h) = 0,
\end{align}
where $\psi_\alpha(f,g,h) := g + \alpha \left( f - \tau \int_{0}^{1} h(\cdot,r) \, dr \right)$. The characteristic equation associated with the homogeneous part of \eqref{eq:22} reads:
\begin{gather}  \label{eq:23}
r^4 - T r^2 = 0
\end{gather}
and its roots are given by
\begin{gather}  \label{eq:25}
r = 0, \; r = \varrho \text{ and } r = -\varrho, 
\end{gather}
where $\varrho = \sqrt{T}$. Consequently, the homogeneous solution $y_H := y_H(x)$ is given by:
\begin{gather}  \label{eq:26}
y_H = c_1 + c_2 x + c_3 e^{\varrho x} + c_4 e^{-\varrho x}, \quad c_i \in \mathbb{R}, \, i \in \{1, \, 2, \, 3, \, 4\}.
\end{gather}
Let $y_P$ be a particular solution of the non-homogeneous equation \eqref{eq:22} associated with the source term $\psi_\alpha$. It can be found by varying the constants $c_i$. 
Substituting $y$ by $y_P$ in non-homogeneous equation \eqref{eq:22} yields:
\begin{gather}  \label{eq:27}
\underbrace{
\begin{pmatrix}
1 & x & e^{\varrho x} & e^{-\varrho x} \\
0 & 1 & \varrho e^{\varrho x} & -\varrho e^{-\varrho x} \\
0 & 0 & \varrho^2 e^{\varrho x} & \varrho^2 e^{-\varrho x} \\
0 & 0 & \varrho^3 e^{\varrho x} & -\varrho^3 e^{-\varrho x}
\end{pmatrix}}_{
W(x)
}
\underbrace{
\begin{pmatrix}
c_1'(x) \\ c_2'(x) \\ c_3'(x) \\ c_4'(x)
\end{pmatrix}}_{
C'(x)
}
=
\begin{pmatrix}
0 \\ 0 \\ 0 \\ -\psi_\alpha(f,g,h)
\end{pmatrix}
\end{gather}
Let $W(x)$ be the matrix associated with the above fundamental system.
The Wronskian $\Delta := \det(W(x))$ is given by:
\begin{align}  \label{eq:28}
\Delta 
= \begin{vmatrix}
1 & x & e^{\varrho x} & e^{-\varrho x} \\
0 & 1 & \varrho e^{\varrho x} & -\varrho e^{-\varrho x} \\
0 & 0 & \varrho^2 e^{\varrho x} & \varrho^2 e^{-\varrho x} \\
0 & 0 & \varrho^3 e^{\varrho x} & -\varrho^3 e^{-\varrho x}
\end{vmatrix} 
= \begin{vmatrix}
\varrho^2 e^{\varrho x} & \varrho^2 e^{-\varrho x} \\
\varrho^3 e^{\varrho x} & -\varrho^3 e^{-\varrho x}
\end{vmatrix} 
= - 2\varrho^5.
\end{align}
By Cramer's rule, we obtain $ c_i'(x) := \dfrac{\Delta_i}{\Delta}, \; i=1, \, 2, \, 3, \, 4$ , where:
\begin{align}  \label{eq:29}
\Delta_1 
& = \begin{vmatrix}
0 & x & e^{\varrho x} & e^{-\varrho x} \\
0 & 1 & \varrho e^{\varrho x} & -\varrho e^{-\varrho x} \\
0 & 0 & \varrho^2 e^{\varrho x} & \varrho^2 e^{-\varrho x} \\
-\psi_\alpha(f,g,h) & 0 & \varrho^3 e^{\varrho x} & -\varrho^3 e^{-\varrho x}
\end{vmatrix} 
= 2 x \varrho^3 \psi_\alpha(f,g,h)
\end{align}
\begin{align}  \label{eq:30}
\Delta_2 
& = \begin{vmatrix}
1 & 0 & e^{\varrho x} & e^{-\varrho x} \\
0 & 0 & \varrho e^{\varrho x} & -\varrho e^{-\varrho x} \\
0 & 0 & \varrho^2 e^{\varrho x} & \varrho^2 e^{-\varrho x} \\
0 & -\psi_\alpha(f,g,h) & \varrho^3 e^{\varrho x} & -\varrho^3 e^{-\varrho x}
\end{vmatrix}
= -2 \varrho^3 \psi_\alpha(f,g,h) 
\end{align}
\begin{align}  \label{eq:31}
\Delta_3
& = \begin{vmatrix}
1 & x & 0 & e^{-\varrho x} \\
0 & 1 & 0 & -\varrho e^{-\varrho x} \\
0 & 0 & 0 & \varrho^2 e^{-\varrho x} \\
0 & 0 & -\psi_\alpha(f,g,h)  & -\varrho^3 e^{-\varrho x}
\end{vmatrix}
= \varrho^2 e^{-\varrho x} \psi_\alpha(f,g,h)
\end{align}
\begin{align}  \label{eq:32}
\Delta_4 
& = \begin{vmatrix}
1 & x & e^{\varrho x} & 0 \\
0 & 1 & \varrho e^{\varrho x} & 0 \\
0 & 0 & \varrho^2 e^{\varrho x} & 0 \\
0 & 0 & \varrho^3 e^{\varrho x} & -\psi_\alpha(f,g,h)
\end{vmatrix}
= -\varrho^2 e^{\varrho x} \psi_\alpha(f,g,h).
\end{align}
Specifically, we find:
\begin{equation}  \label{eq:33}
\begin{aligned}
& c_1'(x) = -\frac{x \psi_\alpha(f,g,h)}{\varrho^2} \qquad  c_2'(x) = \frac{\psi_\alpha(f,g,h)}{\varrho^2} \\ 
& c_3'(x) = -\frac{e^{-\varrho x} \psi_\alpha(f,g,h)}{2 \varrho^3} \qquad c_4'(x) = \frac{e^{\varrho x} \psi_\alpha(f,g,h)}{2 \varrho^3},
\end{aligned}
\end{equation}
and consequently, a particular solution is given by,
\begin{equation} \label{eq:34}
\begin{aligned}
y_P & = \frac{1}{\varrho^2} \int_{0}^{x} (x-r) \psi_\alpha(f,g,h)(r) \, dr \\
& \quad - \frac{1}{\varrho^3} \int_{0}^{x} \sinh \Big( \varrho(x-r) \Big)\psi_\alpha(f,g,h)(r) \, dr.
\end{aligned}
\end{equation}
Hence, the general solution to the non-homogeneous equation \eqref{eq:22} is given by:
\begin{equation}  \label{eq:35}
\begin{gathered}
y(x) = c_1 + c_2 x + c_3 e^{\varrho x} + c_4 e^{-\varrho x} + 
\frac{1}{\varrho^2} \int_{0}^{x} (x-r) \psi_\alpha(f,g,h)(r) \, dr \\
- \frac{1}{\varrho^3} \int_{0}^{x} \sinh \Big( \varrho(x-r) \Big)\psi_\alpha(f,g,h)(r) \, dr.
\end{gathered}
\end{equation}
To compute the coefficients $c_i, \, i=1, \, 2, \, 3, \, 4$ in \eqref{eq:35}, we apply the boundary conditions at $x = 0$ and $x = \ell$:
\begin{equation}  \label{eq:36}
	\left\{
	\begin{array}{>{\displaystyle}l}
	y(0) = y'(0) = 0,  \\
	y''(\ell) = 0, \\
	y^{(3)}(\ell) - T y'(\ell) = \kappa f(\ell).
	\end{array}
	\right.
\end{equation}
The successive derivatives of the solution $y$ are the following expressions:
\begin{gather}  \label{eq:38}
\begin{aligned}
y'(x) &= c_2 + \varrho \big( c_3 e^{\varrho x} - c_4 e^{-\varrho x} \big) \\
& \quad + \, \frac{1}{\varrho^2} \int_{0}^{x} \Big( 1 - \cosh\big( \varrho(x-r) \big) \Big) \psi_\alpha(f,g,h)(r) \, dr 
\end{aligned} 
\\
y''(x) = \varrho^2 \big( c_3 e^{\varrho x} + c_4 e^{-\varrho x} \big) - \frac{1}{\varrho} \int_{0}^{x} \sinh\big( \varrho(x-r) \big) \psi_\alpha(f,g,h)(r) \, dr \\
y^{(3)}(x) = \varrho^3 \big( c_3 e^{\varrho x} - c_4 e^{-\varrho x} \big) - \frac{1}{\varrho} \int_{0}^{x} \cosh\big( \varrho(x-r) \big) \psi_\alpha(f,g,h)(r) \, dr.
\end{gather}
The boundary condition $\eqref{eq:36}_1$ is equivalent to the following algebraic system:
\begin{equation}   \label{eq:39}
	\left\{
	\begin{array}{>{\displaystyle}l}
	c_1 + c_3 + c_4 = 0 \\
	c_2 + \varrho \big( c_3 - c_4 \big) = 0.
	\end{array}
	\right.
\end{equation}
The condition $\eqref{eq:36}_2$ becomes:
\begin{gather}  \label{eq:40}
c_3 e^{\varrho \ell} + c_4 e^{-\varrho \ell} = \frac{1}{\varrho^3} \int_{0}^{\ell} \sinh\Big( \varrho(\ell-r) \Big) \psi_\alpha(f,g,h)(r) \, dr.
\end{gather}
The third boundary condition $\eqref{eq:36}_3$ rewrites as:
\begin{equation}
\begin{gathered}
- T c_2 + \Big( \varrho^3 - T \varrho \Big) c_3 e^{\varrho \ell} 
- \Big( \varrho^3 - T \varrho \Big) c_4 e^{-\varrho \ell} 
= \kappa f(\ell) 
\\
+ \int_{0}^{\ell} \left( -T \frac{\cosh\big( \varrho(\ell-r) \big) - 1}{\varrho^2} + \frac{\cosh\big( \varrho(\ell-r) \big)}{\varrho}  \right) \psi_\alpha(f,g,h)(r) \, dr. 
\end{gathered} \label{eq:41}
\end{equation}
Ultimately, the coefficients $c_i, \, i=1, 2, 3, 4$ are found to satisfy the algebraic system:
\begin{equation}  \label{eq:43}
	\left\{
	\begin{array}{>{\displaystyle}l}
	c_1 + c_3 + c_4 = 0 \\
	c_2 + \varrho \Big( c_3 - c_4 \Big) = 0 \\
	e^{\varrho \ell} c_3 + e^{-\varrho \ell} c_4  = \varphi_1(\ell) \\
	-T c_2 + \pi(\varrho) e^{\varrho \ell} c_3 - \pi(\varrho) e^{-\varrho \ell} c_4 = \varphi_2 (\ell),
	\end{array}
	\right.
\end{equation}
where the function $\pi(\cdot)$ and constants $\varphi_1(\ell), \, \varphi_2(\ell)$ are given by:
\begin{equation}  \label{eq:44}
\begin{aligned}
& \pi(\varrho) := \varrho^3 - T \varrho \qquad \varphi_1(\ell) := \frac{1}{\varrho^3} \int_{0}^{\ell} \sinh\Big( \varrho(\ell-r) \Big) \psi_\alpha(f,g,h)(r) \, dr, \\
& \varphi_2(\ell) :=
\frac{1}{\varrho^2} \int_{0}^{\ell} \left(  \frac{\cosh\Big( \varrho(\ell-r) \Big) - 1}{\varrho^2} 
+ \frac{\cosh\Big( \varrho(\ell-r) \Big)}{\varrho}  
\right) \psi_\alpha(f,g,h)(r) \, dr \\
& \qquad \qquad + \kappa f(\ell).
\end{aligned}
\end{equation}
A direct computation of the determinant $\Delta_c$ associated with the linear system \eqref{eq:43} yields $\Delta_c = -2 \pi(\varrho) + 2 P \varrho \cosh(\varrho \ell) \neq 0$, which guarantees the existence and uniqueness of the coefficients $c_i$. It follows that the linear operator $\mathbb{A}$ is invertible, and its inverse $\mathbb{A}^{-1}: \mathscr{H} \to \mathscr{D}(\mathbb{A})$ is continuous by the closed graph Theorem.

We now prove the compactness of $\mathbb{A}^{-1}$. We recall that $\mathscr{D}(\mathbb{A}) \subset H^4(0,1) \times H^2(0,1) \times L^2\big(0,\ell; H^1(0,1) \big))$. 
Since the domain $\mathscr{D}(\mathbb{A})$ is compactly embedded into $\mathscr{H}$ by the Rellich-Kondrachov theorem, and the resolvent $\mathbb{A}^{-1}: \mathscr{H} \to \mathscr{D}(\mathbb{A})$ is bounded, it follows immediately that $\mathbb{A}^{-1}$ is a compact operator on $\mathscr{H}$.
\end{proof}

We, now, state the first main result of this paper.
\begin{theorem} \label{th1}
The linear operator $\mathbb{A}$ generates a $C_0$-semigroup on $\mathscr{H}$. Consequently, for any initial datum $Y_0 \in \mathscr{H}$, system \eqref{eq:p1} admits a unique mild solution $Y \in C([0, +\infty), \mathscr{H})$. Furthermore, if $Y_0 \in \mathscr{D}(\mathbb{A})$, then $Y$ is a classical solution satisfying $Y \in C([0, +\infty), \mathscr{D}(\mathbb{A})) \cap C^1([0, +\infty), \mathscr{H})$.
\end{theorem}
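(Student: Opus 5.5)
The plan is to show that a shifted operator $\mathbb{A} - \omega I$ is m-dissipative on $\mathscr{H}$ for a suitable $\omega \geq 0$, and then invoke the Lümer--Phillips theorem. A shift is necessary because $\alpha$ has no sign constraint and the weight $\xi>0$ is arbitrary, so $\mathbb{A}$ itself need not be dissipative. Once $\mathbb{A}-\omega I$ generates a contraction semigroup, $\mathbb{A}$ generates the $C_0$-semigroup $e^{\omega t}e^{t(\mathbb{A}-\omega I)}$. The statements about mild solutions for $Y_0\in\mathscr{H}$ and classical solutions for $Y_0\in\mathscr{D}(\mathbb{A})$ then follow from standard semigroup theory \cite{pazy1983, engel2000}. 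The density and closedness needed for Lümer--Phillips are provided by Lemma~\ref{l1}.

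\textbf{Dissipativity.} Take $Y=(y,v,u)\in\mathscr{D}(\mathbb{A})$ and compute $\prec \mathbb{A}Y, Y\succ$ term by term.

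For the beam part, integrate $\int_0^\ell(-y^{(4)}+Ty'')v\,dx$ by parts twice. The clamped conditions $v(0)=v'(0)=0$ (since $v\in H^2_E$), together with $y''(\ell)=0$ and $y^{(3)}(\ell)-Ty'(\ell)=\kappa v(\ell)$, give
\[
\int_0^\ell(-y^{(4)}+Ty'')v\,dx = -\int_0^\ell y''v''\,dx - T\int_0^\ell y'v'\,dx - \kappa v(\ell)^2 .
\]
The first two terms cancel with $T\int y'v'+\int y''v''$ from the inner product.

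For the delay part, integrate in $s$ and use $u(\cdot,0)=v$:
\[
-\frac{\xi}{\tau}\int_0^\ell\!\int_0^1 u_s u\,ds\,dx = \frac{\xi}{2\tau}\int_0^\ell v^2\,dx - \frac{\xi}{2\tau}\int_0^\ell u(x,1)^2\,dx .
\]
The remaining coupling term is $-\alpha\int_0^\ell u(\cdot,1)v\,dx$, which Young's inequality bounds by $\frac{\xi}{2\tau}\int u(\cdot,1)^2+\frac{\alpha^2\tau}{2\xi}\int v^2$. Putting these together,
\[
\prec \mathbb{A}Y,Y\succ \le -\kappa v(\ell)^2+\Big(\frac{\xi}{2\tau}+\frac{\alpha^2\tau}{2\xi}\Big)\|v\|_{L^2}^2\le \omega\|Y\|_{\mathscr{H}}^2,
\qquad \omega:=\frac{\xi}{2\tau}+\frac{\alpha^2\tau}{2\xi}.
\]
Hence $\mathbb{A}-\omega I$ is dissipative.

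\textbf{Maximality.} I need $\lambda I-\mathbb{A}$ to be surjective for some $\lambda>\omega$. I would not redo a variation-of-constants computation. Lemma~\ref{l2} gives $0\in\rho(\mathbb{A})$ with $\mathbb{A}^{-1}$ compact, so the spectrum of $\mathbb{A}$ consists only of isolated eigenvalues. For $\lambda>\omega$, dissipativity gives $\|(\lambda-\mathbb{A})Y\|\ge(\lambda-\omega)\|Y\|$, so $\lambda$ is not an eigenvalue. Therefore $\lambda\in\rho(\mathbb{A})$ and $\lambda-\mathbb{A}$ is onto. This makes $\mathbb{A}-\omega I$ m-dissipative.

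As a fallback, one can solve $(\lambda-\mathbb{A})Y=F$ directly. Set $u(x,s)=v(x)e^{-\lambda\tau s}+\tau\int_0^s e^{-\lambda\tau(s-r)}h(x,r)\,dr$ and substitute $v=\lambda y-f$. This leaves the fourth-order problem
\[
\lambda^2y+y^{(4)}-Ty''+\alpha\lambda e^{-\lambda\tau}y=\text{data},
\]
with the boundary term $\kappa\lambda y(\ell)$ carried by the natural condition at $x=\ell$. Its weak form on $H^2_E$ is coercive for $\lambda$ large, because $\lambda^2+\alpha\lambda e^{-\lambda\tau}>0$ eventually. Lax--Milgram then gives existence, and elliptic regularity yields $y\in H^4$ together with the boundary conditions.

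I expect the main obstacle to be this maximality step. It requires either trusting the invertibility and compactness from Lemma~\ref{l2}, or carrying out the Lax--Milgram argument with careful recovery of the natural boundary conditions at $x=\ell$. The dissipativity estimate itself is routine.
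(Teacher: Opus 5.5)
Your dissipativity computation is correct; your shift $\omega=\frac{\xi}{2\tau}+\frac{\alpha^2\tau}{2\xi}$ is even slightly smaller than the paper's $m_\alpha$. The overall architecture (shifted dissipativity, L\"umer--Phillips, then rescaling) is the paper's.

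The problem is your primary maximality argument. It rests on the compactness of $\mathbb{A}^{-1}$ asserted in Lemma~2, and that assertion is false. The third component of $\mathscr{D}(\mathbb{A})$ has $H^1$-regularity in $s$ only, none in $x$, and $L^2(0,\ell;H^1(0,1))$ does not embed compactly into $L^2((0,\ell)\times(0,1))$. Here is an explicit counterexample. Take $Y_n:=(0,0,s\sin(n\pi x/\ell))$. It lies in $\mathscr{D}(\mathbb{A})$, because $u_n(\cdot,0)=0=v_n$ and all boundary conditions hold trivially. Its image $\mathbb{A}Y_n=(0,-\alpha\sin(n\pi x/\ell),-\tau^{-1}\sin(n\pi x/\ell))$ is bounded in $\mathscr{H}$. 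Yet $\|Y_n-Y_m\|_{\mathscr{H}}^2=\xi\ell/3$ for $n\neq m$, so $\mathbb{A}^{-1}$ is not compact. The inference ``compact resolvent, hence the spectrum consists of isolated eigenvalues, hence a non-eigenvalue $\lambda>\omega$ is in $\rho(\mathbb{A})$'' therefore has no valid justification. The paper's Case~2 ($\alpha<0$) uses exactly this compactness through the Fredholm alternative for $I-\lambda\mathbb{A}^{-1}$, so that part of the paper's proof has the same gap.

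Your fallback is the argument that actually works, and you should make it the main one. For $\lambda>\max\{\omega,|\alpha|\}$ you have $\lambda^2+\alpha\lambda e^{-\lambda\tau}\geq\lambda(\lambda-|\alpha|)>0$. Then the form
$\int_0^\ell y''\varphi''+T\int_0^\ell y'\varphi'+\big(\lambda^2+\alpha\lambda e^{-\lambda\tau}\big)\int_0^\ell y\varphi+\kappa\lambda\, y(\ell)\varphi(\ell)$
is coercive on $H^2_E(0,\ell)$, and Lax--Milgram gives $y$. Testing with $\varphi\in\mathcal{D}(0,\ell)$ shows $y\in H^4(0,\ell)$. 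Since $\varphi(\ell)$ and $\varphi'(\ell)$ can be prescribed independently in $H^2_E(0,\ell)$, you recover $y''(\ell)=0$ and $y'''(\ell)-Ty'(\ell)=\kappa(\lambda y(\ell)-f(\ell))=\kappa v(\ell)$. Hence $Y\in\mathscr{D}(\mathbb{A})$.

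This is the paper's Case~1 computation, but taking $\lambda$ large makes it valid for every sign of $\alpha$. So the paper's case distinction, and any reliance on Lemma~2, become unnecessary. The spectrum of $\mathbb{A}$ is in fact discrete here, because it is the zero set of an entire characteristic determinant of this fourth-order boundary-value problem. Seeing that, however, requires the same reduction to the $y$-problem, not compactness.
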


\begin{proof}
First, we prove the dissipativity of the linear operator $\mathbb{A} - m_\alpha I$. We have:
\begin{align} 
\prec \mathbb{A}Y, Y \succ &= \prec 
\begin{pmatrix}
v \\ -y^{(4)} + Ty'' - \alpha u(\cdot, 1) \\ -\tau^{-1} u_s(\cdot, s)
\end{pmatrix},
\begin{pmatrix}
y \\ v \\ u
\end{pmatrix}
\succ  \nonumber
\\
\begin{split} \label{eq:45}
\prec \mathbb{A}Y, Y \succ &= \int_0^\ell \Big( -y^{(4)}(x) + T y''(x) - \alpha u(x, 1) \Big) v(x) \, dx \\
& \quad + \int_{0}^{\ell} v''(x) y''(x) \, dx + T \int_{0}^{\ell} v'(x) y'(x) \, dx \\
& \quad - \xi \tau^{-1} \int_{0}^{\ell} \int_{0}^{1} u_s(x,s) u(x,s) \, ds \, dx.
\end{split}
\end{align}
Performing successive integrations by parts and invoking the boundary conditions $\eqref{eq:p2}_3$ and $\eqref{eq:p2}_5$, we obtain:
\begin{gather} \label{eq:47}
\begin{aligned}
\int_{0}^{\ell} y^{(4)}(x) v(x) \, dx 
& = \Big[ y^{(3)}(x)v(x) - y''(x)v'(x) \Big]_0^\ell + \int_{0}^{\ell} y''(x) v''(x) \, dx 
\\
& = y^{(3)}(\ell)v(\ell) + \int_{0}^{\ell} y''(x) v''(x) \, dx 
\end{aligned}
\\ 
\begin{aligned}
\int_{0}^{\ell} y''(x) v(x) \, dx 
& = \Big[ y'(x)v(x) \Big]_0^\ell - \int_{0}^{\ell} y'(x) v'(x) \, dx 
\\
& = y'(\ell)v(\ell) - \int_{0}^{\ell} y'(x) v'(x) \, dx 
\end{aligned} \label{eq:47a}
\\ 
\begin{aligned}
\int_{0}^{1} u_s(\cdot,s)u(\cdot,s) \, ds 
& =
\frac{1}{2} \int_{0}^{1} \frac{d}{ds} \big( u^2(\cdot,s) \big) \, ds
= 
\Big[ u^2(\cdot,s) \Big]_{s=0}^{s=1} 
\\
& = \frac{1}{2} \Big( u^2(\cdot, 1) - u^2(\cdot, 0) \Big).
\end{aligned}\label{eq:47b}
\end{gather}
By combining the expressions \eqref{eq:47}--\eqref{eq:47b} and using the boundary condition $\eqref{eq:p2}_6$, it follows that:
\begin{align} \label{eq:48}
\begin{split}
\prec \mathbb{A}Y, Y \succ 
&= \Big( Ty'(\ell) - y^{(3)}(\ell) \Big) v(\ell) - \frac{\xi \tau^{-1}}{2} \int_{0}^{\ell} \Big( u^2(x,1) - u^2(x,0) \Big) \, dx
\\
& \quad 
- \alpha \int_{0}^{\ell} u(x,1)v(x) \, dx
\\
&= - \frac{\xi \tau^{-1}}{2} \int_{0}^{\ell} \Big( u^2(x,1) - v^2(x) \Big) \, dx 
- \alpha \int_{0}^{\ell} u(x,1)v(x) \, dx
\\
& \quad 
-\kappa v^2(\ell).
\end{split} 
\end{align}  
By applying Young's inequality to the cross term $-\alpha u(x,1)v(x)$ for all $x \in (0,\ell)$, we obtain:
\begin{align}  \label{eq:49}
-\alpha u(x,1)v(x) \leqslant \frac{\varepsilon_1}{2} u^2(x,1) + \frac{\alpha^2}{2\varepsilon_1} v^2(x), \quad \forall  \, \varepsilon_1>0.
\end{align}
Consequently, the identity \eqref{eq:48} leads to the following estimates:
\begin{align}  \label{eq:50}
\prec \mathbb{A}Y, Y \succ 
& \leqslant 
\frac{\alpha^2 \varepsilon_1^{-1} + \xi \tau^{-1}}{2} \int_{0}^{\ell} v^2(x) \, dx \nonumber
+ \frac{\varepsilon_1 - \xi \tau^{-1}}{2} \int_{0}^{\ell} u^2(x,1) \, dx -\kappa v^2(\ell) 
\\
\prec \mathbb{A}Y, Y \succ 
& \leqslant 
\Big( \frac{\xi}{2\tau} + \frac{\tau \alpha^2}{\xi} \Big) \int_{0}^{\ell} v^2(x) \, dx
- \frac{\xi \tau^{-1}}{4} \int_{0}^{\ell} u^2(x,1) \, dx 
-\kappa v^2(\ell),
\end{align}
after choosing $\varepsilon_1 = \frac{\xi \tau^{-1}}{2}$. Hence, we arrive at the final estimate:
\begin{align} \label{eq:51}
\prec \Bigg( \mathbb{A} - \bigg( \underbrace{\frac{\xi}{2\tau} + \frac{\tau \alpha^2}{\xi} \bigg)}_{m_\alpha} \Bigg)Y, Y \succ 
& \leqslant - \frac{\xi \tau^{-1}}{4} \int_{0}^{1} u^2(x,1) \, dx
-\kappa v^2(\ell) 
\leqslant 0,
\end{align}
where $m_\alpha = \frac{\xi}{2\tau} + \frac{\tau \alpha^2}{\xi} > 0$. Thus, $\mathbb{A} - m_\alpha I$ is dissipative on $\mathscr{H}$.

To complete the proof that the operator $\mathbb{A} - m_\alpha I$ generates a $C_0$-semigroup via the L\"umer-Phillips Theorem, it remains to establish its maximality. Depending on the values of the coefficient $\alpha$, we distinguish the following two cases:

\paragraph{Case 1:} $\alpha \geqslant 0$.
We prove that the operator $\lambda I - \mathbb{A}$ is surjective for any $\lambda>0$. This consists in showing that for every  $F = (f, g, h) \in \mathscr{H}$, the resolvent equation $(\lambda I - \mathbb{A})Y = F$ admits a solution in $\mathscr{D}(\mathbb{A})$. Explicitly, this equation reads:
\begin{equation} \label{eq:52}
	\left\{
	\begin{array}{>{\displaystyle}l}
	\lambda y - v = f,  \\
	\lambda v + y^{(4)} - Ty'' + \alpha u(\cdot,1) = g, \\
	\lambda u(\cdot,s) + \tau^{-1} u_s(\cdot, s) = h(\cdot,s), \quad s \in (0,1).
	\end{array}
	\right.
\end{equation}
The relations $\eqref{eq:52}_1$ and $\eqref{eq:52}_3$ give:
\begin{equation} \label{eq:53}
	\left\{
	\begin{array}{>{\displaystyle}l}
	v = \lambda y - f,  \\
	u(\cdot,s) = \Big( \lambda y - f \Big) e^{-\lambda \tau s} + \tau \int_0^s h(\cdot,\omega) e^{-\lambda \tau (s-\omega)} \, d\omega \quad s \in (0,1).
	\end{array}
	\right.
\end{equation}
Substituting the expressions for $v$ and $u$ into \eqref{eq:53} yields:
\begin{eqnarray} 
& \begin{aligned}
y^{(4)} - Ty'' + \Big( \underbrace{\lambda^2 + \alpha \lambda e^{-\tau \lambda}}_{q^1_{\alpha, \tau}(\lambda)} \Big)y = g + \Big( \underbrace{\lambda + \alpha e^{-\tau \lambda}}_{q^2_{\alpha, \tau}(\lambda)} \Big)f 
\\
+ \, \big(\underbrace{- \alpha \tau e^{-\tau \lambda}}_{q^3_{\alpha, \tau}(\lambda)}\big) \int_0^1 e^{\lambda \tau \omega} h(\cdot,\omega) \, d\omega.
\end{aligned} \label{eq:55}
\end{eqnarray}
We observe that $q^i_{\alpha, \tau} > 0, \, i=1,2$ and $q^3_{\alpha, \tau} \leqslant 0$. Hence, finding the solution $Y$ reduces to solving the following system for $y$:
\begin{equation} \label{eq:56}
	\left\{
	\begin{array}{>{\displaystyle}l}
	y^{(4)} - Ty'' + q^1_{\alpha, \tau}(\lambda)y = \psi\big( f,g,h \big),\\
	y(0) = y'(0) = y''(\ell) = 0, \\
	y^{(3)}(\ell) - T y'(\ell) - \lambda \kappa y(\ell) = -\kappa f(\ell),
	\end{array}
	\right.
\end{equation}
where the function $\psi\big( f,g,h \big)$ is defined by:
\begin{gather} \label{eq:57}
\psi\big( f,g,h \big) 
= g + q^2_{\alpha, \tau}(\lambda) f + q^3_{\alpha, \tau}(\lambda) \int_0^1 e^{\lambda \tau \omega} h(\cdot,\omega) \, d\omega.
\end{gather}
For any test function $\varphi \in H^2_E(0,\ell)$, the following identity holds:
\begin{gather} \label{eq:58}
\int_{0}^{\ell} \Big( y^{(4)}(x) - Ty''(x) + q^1_{\alpha, \tau}(\lambda)y(x) \Big)\varphi(x) \, dx = \int_{0}^{\ell} \psi\big( f,g,h \big)(x) \varphi(x) \, dx.
\end{gather}
Integrating by parts and applying the boundary condition $\eqref{eq:56}_2$, we obtain:
\begin{eqnarray}
& \begin{aligned}
\int_{0}^{\ell} y^{(4)}(x) \varphi(x) \, dx 
& = 
\Big[ y^{(3)}(x) \varphi(x) - y''(x) \varphi'(x) \Big]_0^\ell + \int_{0}^{\ell} y''(x) \varphi''(x) \, dx 
\\
& = y^{(3)}(\ell) \varphi(\ell) + \int_{0}^{\ell} y''(x) \varphi''(x) \, dx 
\end{aligned} \label{eq:59}
\\
& \begin{aligned}
\int_{0}^{\ell} y''(x) \varphi(x) \, dx 
& = \Big[ y'(x) \varphi(x) \Big]_0^\ell - \int_{0}^{\ell} y'(x) \varphi'(x) \, dx 
\\
& = y'(\ell) \varphi(\ell) - \int_{0}^{\ell} y'(x) \varphi'(x) \, dx.
\end{aligned} \label{eq:59a}
\end{eqnarray}
Combining the boundary condition $\eqref{eq:56}_3$ with the identities \eqref{eq:59} and \eqref{eq:59a}, the relation \eqref{eq:58} reduces to:
\begin{align}
& \int_{0}^{\ell} y''(x) \varphi''(x) \, dx + T \int_{0}^{\ell} y'(x) \varphi'(x) \, dx + q^1_{\alpha, \tau}(\lambda) \int_{0}^{\ell} y(x) \varphi(x) \, dx \nonumber
\\
& \quad + \Big( y^{(3)}(\ell) - T y'(\ell) \Big) \varphi(\ell)= \int_{0}^{\ell} \psi\big( f,g,h \big)(x) \varphi(x) \, dx \nonumber
\\
\begin{split} \label{eq:60}
& \int_{0}^{\ell} y''(x) \varphi''(x) \, dx + T \int_{0}^{\ell} y'(x) \varphi'(x) \, dx + q^1_{\alpha, \tau}(\lambda) \int_{0}^{\ell} y(x) \varphi(x) \, dx
\\
& \quad + \lambda \kappa y(\ell) \varphi(\ell) = \int_{0}^{\ell} \psi\big( f,g,h \big)(x) \varphi(x) \, dx + \kappa f(\ell) \varphi(\ell).
\end{split}
\end{align}
Thus, the variational problem amounts to finding $y \in H^2_E(0,\ell)$ such that:
\begin{align} \label{eq:62}
\mathcal{L}(y,\varphi) = \mathcal{J}(\varphi) 
\end{align}
for all $\varphi \in H^2_E(0,\ell)$,  where the bilinear form $\mathcal{L} : H^2_E(0,\ell) \times H^2_E(0,\ell) \to \mathbb{R}$ and the linear form $\mathcal{J} : H^2_E(0,\ell) \to \mathbb{R}$ are respectively defined by:
\begin{gather} \label{eq:63}
\mathcal{L}(y,\varphi) := \int_{0}^{\ell} y'' \varphi'' \, dx + T \int_{0}^{\ell} y' \varphi' \, dx + q^1_{\alpha, \tau}(\lambda) \int_{0}^{\ell} y \varphi \, dx + \lambda \kappa y(\ell) \varphi(\ell) \\
\mathcal{J}(\varphi) := \int_{0}^{\ell} \psi\big( f,g,h \big) \varphi \, dx + \kappa f(\ell) \varphi(\ell).
\end{gather}
Furthermore, the bilinear form $\mathcal{L}$ is coercive. Indeed, for any $y \in H^2_E(0,\ell)$, we have:
\begin{equation} \label{eq:64}
\begin{aligned} 
\mathcal{L}(\varphi, \varphi) 
& \geqslant \int_{0}^{\ell} \Big( \varphi''(x)^2 + T \varphi'(x)^2 + q^1_{\alpha, \tau}(\lambda) \varphi^2(x) \Big) \, dx 
\\
\mathcal{L}(\varphi, \varphi)
& \geqslant \min \{1, \, T, \, q^1_{\alpha, \tau}(\lambda)\} \lVert \varphi \rVert^2_{H^2_E(0,1)}.
\end{aligned}
\end{equation}
Next, we establish the continuity of the applications $J$ and $L$. More precisely, we have:
\begin{align} \label{eq:65}
\Big|\mathcal{L}(y,\varphi)\Big| &\leqslant \sqrt{\int_{0}^{\ell} y''(x)^2 \, dx \int_{0}^{\ell} \varphi''(x)^2 \, dx} + T \sqrt{\int_{0}^{\ell} y'(x)^2 \, dx \int_{0}^{\ell} \varphi'(x)^2 \, dx} \nonumber
\\
& \quad + q^1_{\alpha, \tau}(\lambda) \sqrt{\int_{0}^{\ell} y(x)^2 \, dx \int_{0}^{\ell} \varphi(x)^2 \, dx} + \lambda \kappa \int_{0}^{\ell} |y'(x)| \, dx \int_{0}^{\ell} |\varphi'(x)| \, dx \nonumber
\\
& \leqslant \sqrt{\int_{0}^{\ell} y''(x)^2 \, dx \int_{0}^{\ell} \varphi''(x)^2 \, dx} + (\lambda \kappa \ell + T) \sqrt{\int_{0}^{\ell} y'(x)^2 \, dx \int_{0}^{\ell} \varphi'(x)^2 \, dx} \nonumber
\\
& \quad + q^1_{\alpha, \tau}(\lambda) \sqrt{\int_{0}^{\ell} y(x)^2 \, dx \int_{0}^{\ell} \varphi(x)^2 \, dx} \nonumber
\\
\Big|\mathcal{L}(y,\varphi)\Big| & \leqslant 3 \max \Big\{1, \lambda \kappa \ell + T, q^1_{\alpha, \tau}(\lambda) \Big\} \lVert y \rVert_{H^2_E(0,\ell)} \lVert \varphi \rVert_{H^2_E(0,\ell)}.
\end{align}
and
\begin{align} \label{eq:66}
\Big|\mathcal{J}(\varphi)\Big| &\leqslant \lVert \psi(f,g,h) \rVert_{L^2(0,\ell)} \lVert \varphi \rVert_{L^2(0,\ell)} + \kappa \int_{0}^{\ell} |f'(x)| \, dx \int_{0}^{\ell} |\varphi'(x)| \, dx \nonumber
\\
& \leqslant \lVert \psi(f,g,h) \rVert_{L^2(0,\ell)} \lVert \varphi \rVert_{L^2(0,\ell)} + \kappa \ell \lVert f' \rVert_{L^2(0,\ell)} \lVert \varphi' \rVert_{L^2(0,\ell)} \nonumber
\\
\Big|\mathcal{J}(\varphi)\Big| & \leqslant 2 \Big( \lVert \psi(f,g,h) \rVert_{L^2(0,\ell)} + \kappa \ell \lVert f' \rVert_{L^2(0,\ell)} \Big) \lVert \varphi \rVert_{H^2(0,\ell)}.
\end{align}
As for the $L^2$-norm of $\psi(f,g,h)$, it satisfies the following estimate:
\begin{align} \label{eq:67}
\lVert \psi(f,g,h) \rVert_{L^2(0,\ell)}
& \leqslant 
\lVert g \rVert_{L^2(0,\ell)} 
+ q^2_{\alpha, \tau}(\lambda) \lVert f \rVert_{L^2(0,\ell)} \nonumber
\\
& \quad + \big| q^3_{\alpha, \tau}(\lambda) \big| 
\left\lVert \int_0^1 e^{\lambda \tau \omega} h(\cdot,\omega) \, d\omega \right\rVert_{L^2(0,\ell)} \nonumber
\\
& \leqslant 
\lVert g \rVert_{L^2(0,\ell)} 
+ q^2_{\alpha, \tau}(\lambda) \lVert f \rVert_{L^2(0,\ell)} \nonumber
\\
& \quad + \big| q^3_{\alpha, \tau}(\lambda) \big| e^{\lambda \tau} 
\left\lVert \int_0^1 h(\cdot,\omega) \, d\omega \right\rVert_{L^2(0,\ell)} \nonumber
\\
& \leqslant 
\lVert g \rVert_{L^2(0,\ell)} 
+ q^2_{\alpha, \tau}(\lambda) \lVert f \rVert_{L^2(0,\ell)} 
+ \alpha \tau \left\lVert 
\int_0^1  h(\cdot,\omega) \, d\omega 
\right\rVert_{L^2(0,\ell)} \nonumber
\\
\lVert \psi(f,g,h) \rVert_{L^2(0,\ell)}
& \leqslant 
\lVert g \rVert_{L^2(0,\ell)} 
+ q^2_{\alpha, \tau}(\lambda) \lVert f \rVert_{L^2(0,\ell)} 
+ \alpha \tau \left\lVert h 
\right\rVert_{L^2( (0,\ell) \times (0,1) )}.
\end{align}
Inserting this bound into \eqref{eq:66} yields the refined estimate:
\begin{align}
\begin{split}
\Big|\mathcal{J}(\varphi)\Big| 
& \leqslant 
2 \Big( 
\lVert g \rVert_{L^2(0,\ell)} 
+ q^2_{\alpha, \tau}(\lambda) \lVert f \rVert_{L^2(0,\ell)} 
+ \kappa \ell \lVert f' \rVert_{L^2(0,\ell)} 
\\
& \qquad + \alpha \tau \left\lVert h 
\right\rVert_{L^2( (0,\ell) \times (0,1) )} 
\Big) \lVert \varphi \rVert_{H^2(0,\ell)} \nonumber
\\
\Big|\mathcal{J}(\varphi)\Big| 
& \leqslant 
2 \Big( 
2 \max \Big\{ \kappa \ell, \, q^2_{\alpha, \tau}(\lambda) \Big\}
\lVert f \rVert_{H^2(0,\ell)}
+ \lVert g \rVert_{L^2(0,\ell)} 
\\
& \qquad + \alpha \tau \left\lVert h 
\right\rVert_{L^2( (0,\ell) \times (0,1) )} 
\Big) \lVert \varphi \rVert_{H^2(0,\ell)}.
\end{split} \label{eq:68}
\end{align}
Consequently, by virtue of the Lax-Milgram theorem, we conclude that the weak formulation \eqref{eq:62} admits a unique solution $y \in H^2_E(0,\ell)$.

Conversely, assuming $y$ is a weak solution, performing successive integrations by parts yields:
\begin{equation} \label{eq:69}
\begin{gathered}
\int_{0}^{\ell} \Big( y^{(4)}(x) - T y''(x) \Big) \varphi(x) \, dx + q^1_{\alpha, \tau}(\lambda) \int_{0}^{\ell} y(x) \varphi(x) \, dx 
\\
+ \, T\Big[ y'(x)\varphi(x) \Big]_0^\ell + \lambda \kappa y(\ell) \varphi(\ell) 
- \Big[ y^{(3)}(x)\varphi(x) - y''(x)\varphi'(x) \Big]_0^\ell
\\
= \int_{0}^{\ell} \psi(f,g,h)(x) \varphi(x) \, dx + \kappa f(\ell) \varphi(\ell),
\end{gathered}
\end{equation}
for all $\varphi \in H^2_E(0,\ell)$.
Selecting $\varphi \in \mathcal{D}(0,\ell)$, we have:
\begin{align} \label{eq:70}
\int_{0}^{\ell} \Big( y^{(4)}(x) - T y''(x) + q^1_{\alpha, \tau}(\lambda) y(x) \Big) \varphi(x) \, dx = \int_{0}^{\ell} \psi(f,g,h)(x) \varphi(x) \, dx.
\end{align}
Therefore, the identity $y^{(4)} - T y'' + q^1_{\alpha, \tau}(\lambda) y = \psi(f,g,h)$ holds almost everywhere in $(0,\ell)$. Since $\mathcal{D}(0,\ell)$ is dense in $L^2(0,\ell)$, this relation extends to $L^2(0,\ell)$, which immediately implies that $y \in H^4(0,\ell)$. Consequently, we have $v = \lambda y - f \in H^2_E(0,\ell)$ and $u \in L^2(0,\ell; H^1(0,1))$.
Coming back to $H^2_E(0,\ell)$, we obtain:
\begin{gather} \label{eq:71}
- \Big[ y^{(3)}(x)\varphi(x) - y''(x)\varphi'(x) \Big]_0^\ell + T \Big[ y'(x)\varphi(x) \Big]_0^\ell + \kappa \Big(\lambda y(\ell) - f(\ell)\Big) \varphi(\ell) = 0 \nonumber
\\
\Big( -y^{(3)}(\ell) + T y'(\ell) + \kappa \big(\lambda y(\ell) - f(\ell)\big) \Big) \varphi(\ell) + y''(\ell) \varphi'(\ell) = 0.
\end{gather}
As $\varphi$ is arbitrary, the relations $\eqref{eq:56}_2$ and $\eqref{eq:56}_3$ are satisfied. Then $y$ solves \eqref{eq:56}.  Hence, $y$ solves \eqref{eq:56}, which proves that the operator $\lambda I - \mathbb{A}$ is surjective for all $\lambda > 0$. Since $\lambda + m_\alpha > 0$, it follows that $(\lambda + m_\alpha) I - \mathbb{A} = \lambda I - (\mathbb{A} - m_\alpha I)$ is also surjective. Consequently, the L\"umer--Phillips theorem ensures that $\mathbb{A} - m_\alpha I$ generates a contractive $C_0-$semigroup on $\mathscr{H}$.  Finally, by invoking the bounded perturbation theorem (see \cite[Theorem 1.1 p.76]{pazy1983}), we conclude that $\mathbb{A}$ generates a strongly continuous semigroup on $\mathscr{H}$.

\paragraph{Case 2:} $\alpha < 0$.

Since $\mathbb{A} - m_\alpha I$ is dissipative, then $\lambda I - \mathbb{A}$ is  dissipative for all $\lambda > m_\alpha$. It follows that, $\lambda I - \mathbb{A}$ is injective for all $\lambda > m_\alpha$. 
In addition, by virtue of Lemmas \ref{l1} and \ref{l2}, the following identity holds:
\begin{gather} \label{eq:72}
\lambda I - \mathbb{A} = - \mathbb{A} \big( I - \lambda \mathbb{A}^{-1} \big), \quad \forall \lambda > m_\alpha.
\end{gather}
Due to the compactness of linear operator $\lambda \mathbb{A}^{-1}$,  the Fredholm alternative theorem ensures that $I - \lambda \mathbb{A}^{-1}$ is surjective. This, in turn, implies the surjectivity of $\lambda I - \mathbb{A}$ as a composition of surjective operators. By virtue of the L\"umer--Phillips theorem, we deduce that $\mathbb{A} - m_\alpha I$ generates a contraction $C_0-$semigroup on $\mathscr{H}$. Finally, the bounded perturbation Theorem implies that $\mathbb{A}$ generates a $C_0-$semigroup.
\end{proof}

\section{Exponential stability of the system: some sufficient geometric conditions}
This section is devoted to the exponential stabilization of system \eqref{eq:p1}. By constructing an appropriate Lyapunov functional, we identify a region of exponential stability for the system. This region is characterized by specific geometric conditions arising from a system of inequalities.
 
For any regular solution $y$ of system \eqref{eq:p1}, the associated energy is defined by:
\begin{equation}
\begin{aligned}
E(t) & := \frac{1}{2} \left( \int_{0}^{\ell} y_t^2(x,t) \, dx + \int_{0}^{\ell} y_{xx}^2(x,t) \, dx + T \int_{0}^{\ell} y_x^2(x,t) \, dx \right)
\\
& \quad + \frac{\xi}{2} \int_{0}^{\ell} \int_{0}^{1} y_t^2(x,t-s\tau) \, ds \, dx, \quad t \geqslant 0,
\end{aligned} \label{eq:73}
\end{equation}
where $\xi>0$ is a constant that will be fixed later. \newline
The following technical lemmas provide the necessary estimates for the time derivatives of the energy and additional auxiliary functionals to be introduced.

\begin{lemma} \label{l3}
Let $E(t)$ be the energy functional defined by \eqref{eq:73}. For any solution of system \eqref{eq:p1}, the time derivative of the energy satisfies the following estimate for all $t \geqslant 0$:
\begin{equation} \label{eq:74}
\frac{d}{dt}E(t) 
\leqslant 
-\kappa y_t^2(\ell,t)
+ \frac{|\alpha| + \xi}{2} \int_{0}^{\ell} y_t^2(x,t) , dx 
+ \frac{|\alpha| - \xi}{2} \int_{0}^{\ell} y_t^2(x, t-\tau) , dx. 
\end{equation}
\end{lemma}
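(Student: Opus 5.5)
The plan is to differentiate $E(t)$ directly along a regular solution, that is, for $Y_0 \in \mathscr{D}(\mathbb{A})$ so that Theorem \ref{th1} gives the needed regularity. The estimate for mild solutions then follows by density and continuity of the semigroup. I split $E$ into the mechanical part $\frac12\int_0^\ell (y_t^2 + y_{xx}^2 + T y_x^2)\,dx$ and the delay part $\frac{\xi}{2}\int_0^\ell\int_0^1 u^2(x,s,t)\,ds\,dx$, where $u$ is defined in \eqref{eq:1}.

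For the mechanical part, I write its derivative as
\[
\int_0^\ell \bigl(y_t y_{tt} + y_{xx}y_{xxt} + T y_x y_{xt}\bigr)\,dx
\]
and substitute $y_{tt} = -y_{xxxx} + T y_{xx} - \alpha y_t(x,t-\tau)$. I then integrate by parts exactly as in \eqref{eq:47}--\eqref{eq:47a}, using $y_t(0,t)=y_{xt}(0,t)=0$, $y_{xx}(\ell,t)=0$ and $y_{xxx}(\ell,t)-Ty_x(\ell,t)=\kappa y_t(\ell,t)$. The terms $\int y_{xx}y_{xxt}$ and $T\int y_x y_{xt}$ cancel, and the boundary contribution is $-\kappa y_t^2(\ell,t)$. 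What remains is
\[
-\kappa y_t^2(\ell,t) - \alpha\int_0^\ell y_t(x,t)\,y_t(x,t-\tau)\,dx .
\]

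For the delay part, I use $\tau u_t = -u_s$ from \eqref{eq:2}. Since $y_t(x,t-s\tau)=u(x,s,t)$, this gives
\[
\frac{\xi}{2}\frac{d}{dt}\int_0^\ell\!\int_0^1 u^2\,ds\,dx = -\frac{\xi}{2\tau}\int_0^\ell \bigl(u^2(x,1,t)-u^2(x,0,t)\bigr)\,dx ,
\]
which mirrors \eqref{eq:47b}. The cross term is handled by Young's inequality $|\alpha ab|\le \frac{|\alpha|}{2}(a^2+b^2)$. Together these give
\[
\frac{d}{dt}E(t) \le -\kappa y_t^2(\ell,t) + \frac{|\alpha| + \xi/\tau}{2}\int_0^\ell y_t^2(x,t)\,dx + \frac{|\alpha| - \xi/\tau}{2}\int_0^\ell y_t^2(x,t-\tau)\,dx .
\]

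The only real obstacle is reconciling this with the stated coefficients $\frac{|\alpha|\pm\xi}{2}$, which lack the factor $\tau^{-1}$. With energy \eqref{eq:73} exactly as written (delay weight $\xi/2$, integrating $y_t^2(x,t-s\tau)$ over $s\in(0,1)$), the honest computation produces $\xi/\tau$. I would therefore either read \eqref{eq:74} as holding with $\xi$ understood as $\xi/\tau$, i.e.\ a rescaled weight, or note that \eqref{eq:74} is literally exact when $\tau=1$. If the normalization is meant to be $\tau\xi$ in front of the delay integral, the factors cancel and \eqref{eq:74} holds verbatim. I would flag this normalization explicitly rather than hide it, since it affects the later $(\alpha,\xi)$-region.
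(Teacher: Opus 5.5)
Your argument is the paper's own: multiply the beam equation by $y_t$, integrate by parts with the boundary conditions down to $-\kappa y_t^2(\ell,t)-\alpha\int_0^\ell y_t(x,t)\,y_t(x,t-\tau)\,dx$, add the derivative of the delay term, and apply Young. Your $\tau^{-1}$ flag is also correct: with \eqref{eq:73} as written the delay term differentiates to $\frac{\xi}{2\tau}\int_0^\ell\bigl(y_t^2(x,t)-y_t^2(x,t-\tau)\bigr)\,dx$, as the paper's own dissipativity identity \eqref{eq:48} confirms, whereas \eqref{eq:75} and \eqref{eq:81} drop the $\tau^{-1}$ (and disagree with each other on $\xi$ versus $\xi/2$), so \eqref{eq:74} cannot hold for all solutions when $\tau\neq1$ (already for $\alpha=0$ it fails whenever $(\tau-1)\bigl(\|y_t(\cdot,t-\tau)\|_{L^2}^2-\|y_t(\cdot,t)\|_{L^2}^2\bigr)>0$) and holds verbatim exactly under your renormalization $\frac{\xi}{2}\int_0^\ell\int_{t-\tau}^{t}y_t^2(x,s)\,ds\,dx$ of the delay energy, which is also the normalization the paper tacitly uses in \eqref{eq:107}.
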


\begin{proof}
We have:
\begin{align}  \label{eq:75}
\begin{split}
\frac{dE(t)}{dt}
& = 
\frac{d}{dt} \left[ 
\frac{1}{2} \left(
\int_{0}^{\ell} y_t^2(x,t) \, dx + \int_{0}^{\ell} y_{xx}^2(x,t) \, dx + T \int_{0}^{\ell} y_x^2(x,t) \, dx
\right)
\right]
\\
& \qquad \qquad 
+ \, \xi \left( 
\int_{0}^{\ell} y_t^2(x,t) \, dx - \int_{0}^{\ell} y_t^2(x,t-\tau) \, dx
\right).
\end{split}
\end{align}
First, we multiply the first equation of \eqref{eq:p1} by $y_t$, which yields:
\begin{align}  \label{eq:76}
\int_{0}^{\ell} \Big( y_{tt}(x,t) + y_{xxxx}(x,t) - T y_{xx}(x,t) + \alpha y_t(x, t-\tau) \Big) \, y_t(x,t) \, dx = 0.
\end{align}
We then rewrite the first integral term of the time derivative by isolating the time derivative operator:
\begin{align}  \label{eq:77}
\int_{0}^{\ell} y_{tt}(x,t) \, y_t(x,t) \, dx = \dfrac{d}{dt} \left( \frac{1}{2}  \int_{0}^{\ell} y_t^2(x,t) \, dx \right).
\end{align}
Next, performing integrations by parts over $(0,\ell)$ and incorporating the boundary condition $\eqref{eq:p1}_2$, we obtain:
\begin{align}  
\begin{split} \label{eq:78}
\int_{0}^{\ell} y_{xxxx}(x,t) \, y_t(x,t) \, dx 
& = \Big[ y_{xxx}(x,t) \, y_t(x,t) - y_{xx}(x,t) \, y_{tx}(x,t) \Big]_{x=0}^{x=\ell} 
\\
& \quad + \int_{0}^{\ell} y_{xx}(x,t) \, y_{txx}(x,t) \, dx 
\\
& = y_{xxx}(\ell,t) \, y_t(\ell,t) + \dfrac{d}{dt} \left( \frac{1}{2}  \int_{0}^{\ell} y_{xx}^2(x,t) \, dx \right)
\end{split}
\\
\begin{split}  \label{eq:79}
\int_{0}^{\ell} y_{xx}(x,t) \, y_t(x,t) \, dx 
& = \Big[ y_{x}(x,t) \, y_t(x,t) \Big]_{x=0}^{x=\ell} + \int_{0}^{\ell} y_{x}(x,t) \, y_{tx}(x,t) \, dx \\
& = y_{x}(\ell,t) \, y_t(\ell,t) - \dfrac{d}{dt} \left( \frac{1}{2}  \int_{0}^{\ell} y_{x}^2(x,t) \, dx \right).
\end{split}
\end{align}
From identities \eqref{eq:77}--\eqref{eq:79} and using the boundary conditions $\eqref{eq:p1}_3-\eqref{eq:p1}_4$, relation \eqref{eq:76} becomes:
\begin{align} \label{eq:80}
& \dfrac{d}{dt} \left[ \frac{1}{2} \left( \int_{0}^{\ell} y_t^2(x,t) dx + \int_{0}^{\ell} y_{xx}^2(x,t) \, dx + T \int_{0}^{\ell} y_x^2(x,t) \, dx \right) \right] \nonumber\\
& \qquad = -\alpha \int_{0}^{\ell} y_t(x, t-\tau) \, y_t(x,t) \, dx - \Big( y_{xxx}(\ell,t) - T y_x(\ell,t) \Big) y_t(\ell,t) \nonumber \\
& \qquad = -\alpha \int_{0}^{\ell} y_t(x, t-\tau) \, y_t(x,t) \, dx -\kappa y_t^2(\ell,t).
\end{align}
Consequently, identity \eqref{eq:75} becomes:
\begin{align} \label{eq:81}
\begin{split}
\frac{dE(t)}{dt}
&= - \frac{\xi}{2} \int_{0}^{\ell} \Big(  y_t^2(x,t-\tau) - y_t^2(x,t) \Big) \, dx
\\
& \qquad -\alpha \int_{0}^{\ell} y_t(x, t-\tau) \, y_t(x,t) \, dx
-\kappa y_t^2(\ell,t).
\end{split}
\end{align}
Finally, applying Young's inequality to the cross-term $-\alpha y_t(x, t-\tau) y_t(x,t)$ yields:
\begin{align} \label{eq:82}
-\alpha \int_{0}^{\ell} y_t(x, t-\tau) \, y_t(x,t) \, dx
\leqslant
\frac{|\alpha|}{2} \int_{0}^{\ell} \Big( y_t^2(x, t-\tau) + y_t^2(x,t) \Big) \, dx,
\end{align}
from which the estimate \eqref{eq:74} follows directly.
\newline
\normalshape
We can observe that the energy of system \eqref{eq:p1} is not non-increasing over time.
\end{proof}

Furthermore, we define the following two functionals associated with any solution $y$ of problem \eqref{eq:p1}:
\begin{gather} \label{eq:83}
I_1(t) := \int_0^\ell x y_x(x,t) y_t(x,t) \, dx
\\
I_2(t) := \int_0^\ell \int_{t-\tau}^{t} e^{s-t} y_t^2(x,s) \, ds \, dx, \label{eq:83a}
\end{gather}
for all $t \geqslant 0$. 

\begin{lemma} \label{l4}
Let $\varepsilon_1, \varepsilon_2 > 0$ be arbitrary constants. The time derivative of the functional $I_1(t)$ satisfies the following estimate for all $t \geqslant 0$:
\begin{equation} \label{eq:84}
\begin{aligned}
\frac{dI_1(t)}{dt} 
& \leqslant
-\frac{1}{2} \int_0^\ell y_{t}^2(x,t) \, dx 
+ \frac{-3 + T\ell^2 + \varepsilon_2 \ell}{2} \int_0^\ell y_{xx}^2(x,t) \, dx 
\\ 
& \quad 
+ \frac{\varepsilon_1 - T}{2} \int_0^\ell y_{x}^2(x,t) \, dx
+ \frac{\big( \alpha \ell \big)^2}{2\varepsilon_1} \int_{0}^{\ell}  y_t^2(x,t-\tau) \, dx
\\ 
& \quad
+ \left( \frac{\ell}{2} + \frac{\big( \kappa \ell \big)^2}{2 \varepsilon_2} \right) y_t^2(\ell,t).
\end{aligned}
\end{equation}
\end{lemma}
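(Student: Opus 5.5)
The plan is to differentiate $I_1$ directly along a (regular) solution of \eqref{eq:p1}. We replace $y_{tt}$ using the beam equation, integrate by parts in $x$ using the boundary conditions, and absorb the cross terms with Young's inequality. As in Lemma \ref{l3}, we work with classical solutions ($Y_0\in\mathscr{D}(\mathbb{A})$, Theorem \ref{th1}) and conclude for general data by density.

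\textbf{Step 1: differentiate and split.} We write
\[
\frac{dI_1}{dt}=\int_0^\ell x\,y_{xt}y_t\,dx+\int_0^\ell x\,y_x y_{tt}\,dx .
\]
Since $x\,y_{xt}y_t=\tfrac12 x\,(y_t^2)_x$, one integration by parts gives $\tfrac{\ell}{2}y_t^2(\ell,t)-\tfrac12\int_0^\ell y_t^2\,dx$. These are the first term of \eqref{eq:84} and the $\tfrac{\ell}{2}$ part of the boundary coefficient. In the second integral we substitute $y_{tt}=-y_{xxxx}+Ty_{xx}-\alpha y_t(x,t-\tau)$.

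\textbf{Step 2: the fourth- and second-order terms.} Integrating by parts with $y_x(0)=0$ and $y_{xx}(\ell)=0$, we have $\int_0^\ell y_xy_{xxx}\,dx=-\int_0^\ell y_{xx}^2\,dx$ and $\int_0^\ell x\,y_{xx}y_{xxx}\,dx=-\tfrac12\int_0^\ell y_{xx}^2\,dx$. Hence
\[
-\int_0^\ell x\,y_xy_{xxxx}\,dx=-\ell\,y_x(\ell)y_{xxx}(\ell)-\tfrac32\int_0^\ell y_{xx}^2\,dx .
\]
This produces the $-3/2$ coefficient. Similarly,
\[
T\int_0^\ell x\,y_xy_{xx}\,dx=\tfrac{T\ell}{2}y_x^2(\ell)-\tfrac T2\int_0^\ell y_x^2\,dx ,
\]
which gives the $-T/2$ coefficient. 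Using the boundary condition $y_{xxx}(\ell)=Ty_x(\ell)+\kappa y_t(\ell)$, the boundary terms combine into
\[
-\kappa\ell\,y_x(\ell)y_t(\ell)-\tfrac{T\ell}{2}y_x^2(\ell).
\]

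\textbf{Step 3: control of the boundary and delay terms.} This is the only delicate point, because $y_x(\ell)$ is not part of the energy. It must be converted into the $\int y_{xx}^2$ term through $y_x(\ell,t)=\int_0^\ell y_{xx}\,dx$ (recall $y_x(0)=0$) and Cauchy--Schwarz, which give $y_x^2(\ell)\le \ell\int_0^\ell y_{xx}^2\,dx$.

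For the cross term, Young's inequality gives
\[
-\kappa\ell\,y_x(\ell)y_t(\ell)\le \tfrac{\varepsilon_2}{2}y_x^2(\ell)+\tfrac{(\kappa\ell)^2}{2\varepsilon_2}y_t^2(\ell).
\]
Combined with the trace bound, this yields the $\varepsilon_2\ell/2$ and $(\kappa\ell)^2/(2\varepsilon_2)$ contributions. The term $-\tfrac{T\ell}{2}y_x^2(\ell)$ is nonpositive, so it is certainly bounded by $\tfrac{T\ell^2}{2}\int_0^\ell y_{xx}^2\,dx$, which is the (nonsharp) $T\ell^2/2$ term appearing in \eqref{eq:84}. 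We could also simply keep it with its sign.

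Finally, the delay term satisfies
\[
-\alpha\int_0^\ell x\,y_x\,y_t(x,t-\tau)\,dx\le \tfrac{\varepsilon_1}{2}\int_0^\ell y_x^2\,dx+\tfrac{(\alpha\ell)^2}{2\varepsilon_1}\int_0^\ell y_t^2(x,t-\tau)\,dx ,
\]
using $x\le\ell$. Summing the estimates from Steps 1--3 gives exactly \eqref{eq:84}.
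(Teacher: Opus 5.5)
Your proof is correct and follows essentially the same route as the paper. You differentiate $I_1$, integrate by parts using $y_x(0,t)=y_{xx}(\ell,t)=0$ and the feedback law, and then apply Young's inequality to the delay and boundary cross terms together with the trace bound $y_x^2(\ell,t)\le \ell\int_0^\ell y_{xx}^2\,dx$; your observation that the leftover term $-\tfrac{T\ell}{2}y_x^2(\ell,t)$ is nonpositive, and hence trivially dominated by $\tfrac{T\ell^2}{2}\int_0^\ell y_{xx}^2\,dx$, correctly explains the non-sharp $T\ell^2$ contribution in \eqref{eq:84}, which the paper leaves implicit.
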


\begin{proof}
By definition, we have:
\begin{align} \label{eq:85}
\frac{dI_1(t)}{dt} 
& = \int_0^\ell x \Big( y_{xt}(x,t) y_t(x,t) + y_{x}(x,t) y_{tt}(x,t) \Big) \, dx.
\end{align}
Integrating the first integral term by parts over $(0,\ell)$ and using the boundary condition $\eqref{eq:p1}_2$ lead to:
\begin{align} \label{eq:86}
\int_0^\ell x y_{xt}(x,t) y_t(x,t) \, dx 
& = \frac{1}{2} \left( 
\Big[ x y_t^2(x,t) \Big]_{x=0}^{x=\ell} 
- \int_0^\ell y_{t}^2(x,t) \, dx 
\right) \nonumber
\\
& = \frac{1}{2} \left( \ell y_t^2(\ell,t) - \int_0^\ell y_{t}^2(x,t) \, dx
\right).
\end{align}
Similarly, substituting the governing equation $\eqref{eq:p1}_1$ into the second integral term of \eqref{eq:85} yields:
\begin{align} \label{eq:87}
\begin{split}
& \int_0^\ell x y_{x}(x,t) y_{tt}(x,t) \, dx 
\\
& \quad = -\int_0^\ell x y_{x}(x,t) \Big( y_{xxxx}(x,t) - T y_{xx}(x,t) + \alpha y_t(x,t-\tau)\Big) \, dx.
\end{split}
\end{align}
To evaluate this relation, we perform successive integrations by parts. First, the first integral term is rewritten as:
\begin{align} \label{eq:88}
& \int_0^\ell x y_{x}(x,t) y_{xxxx}(x,t) \, dx \nonumber
\\
& \qquad = \Big[ x y_{x}(x,t) y_{xxx}(x,t) \Big]_{x=0}^{x=\ell} 
- \int_0^\ell \Big( y_{x}(x,t) + x y_{xx}(x,t) \Big) y_{xxx}(x,t) \, dx \nonumber
\\
& \qquad = \ell y_{x}(\ell,t) y_{xxx}(\ell,t)
- \int_0^\ell \Big( y_{x}(x,t) + x y_{xx}(x,t) \Big) y_{xxx}(x,t) \, dx. 
\end{align}
In particular, observing that:
\begin{align} \label{eq:89}
\int_0^\ell y_{x}(x,t) y_{xxx}(x,t) \, dx
&= \Big[ y_{x}(x,t) y_{xx}(x,t) \Big]_{x=0}^{x=\ell}
- \int_0^\ell y_{xx}^2(x,t) \, dx \nonumber
\\
\int_0^\ell y_{x}(x,t) y_{xxx}(x,t) \, dx
&= - \int_0^\ell y_{xx}^2(x,t) \, dx,
\end{align} 
and 
\begin{align} \label{eq:90}
\int_0^\ell x y_{xx}(x,t) y_{xxx}(x,t) \, dx
&= \frac{1}{2} \left( 
\Big[ x y_{xx}^2(x,t) \Big]_{x=0}^{x=\ell}
- \int_0^\ell y_{xx}^2(x,t) \, dx 
\right) \nonumber
\\
\int_0^\ell x y_{xx}(x,t) y_{xxx}(x,t) \, dx
&= -\frac{1}{2} \int_0^\ell y_{xx}^2(x,t) \, dx,
\end{align} 
after applying the boundary conditions $\eqref{eq:p1}_2$ and $\eqref{eq:p1}_3$. So, we obtain from  identities \eqref{eq:89} and \eqref{eq:90}:
\begin{align} \label{eq:91}
\int_0^\ell x y_{x}(x,t) y_{xxxx}(x,t) \, dx 
= \frac{3}{2} \int_0^\ell y_{xx}^2(x,t) \, dx 
+ \ell y_{x}(\ell,t) y_{xxx}(\ell,t).
\end{align}
Next, the following holds:
\begin{align} \label{eq:92}
\int_0^\ell x y_{x}(x,t) y_{xx}(x,t) \, dx 
& = \frac{1}{2} \left( 
\Big[ x y_{x}^2(x,t) \Big]_{x=0}^{x=\ell} 
-\int_0^\ell y_{x}^2(x,t) \, dx 
\right) \nonumber
\\
& = \frac{1}{2} \left( 
\ell y_{x}^2(\ell,t) - \int_0^\ell y_{x}^2(x,t) \, dx 
\right).
\end{align}
Therefore, exploiting relations \eqref{eq:91} and \eqref{eq:92}, identity \eqref{eq:87} reduces to:
\begin{align} 
& \int_0^\ell x y_{x}(x,t) y_{tt}(x,t) \, dx \nonumber
\\
&= - \frac{3}{2} \int_0^\ell y_{xx}^2(x,t) \, dx 
- \frac{T}{2} \int_0^\ell y_{x}^2(x,t) \, dx \nonumber \\ 
& \quad 
- \alpha \int_0^\ell x y_t(x,t-\tau) y_x(x,t) \, dx 
- \ell y_{x}(\ell,t)  y_{xxx}(\ell,t)
+ \frac{\ell T}{2} y_{x}^2(\ell,t) . \label{eq:93}
\end{align} 
Consequently, inserting relations \eqref{eq:86} and \eqref{eq:93} into \eqref{eq:85} yields:
\begin{align} 
\frac{dI_1(t)}{dt} 
& = -\frac{1}{2} \int_0^\ell y_{t}^2(x,t) \, dx 
- \frac{3}{2} \int_0^\ell y_{xx}^2(x,t) \, dx 
- \frac{T}{2} \int_0^\ell y_{x}^2(x,t) \, dx \nonumber
\\ 
& \quad 
- \alpha \int_0^\ell x y_t(x,t-\tau) y_x(x,t) \, dx + \frac{\ell}{2} y_t^2(\ell,t) + \frac{\ell T}{2} y_{x}^2(\ell,t) \nonumber
\\
& \quad
- \ell y_{x}(\ell,t)  y_{xxx}(\ell,t) \nonumber
\\
\begin{split}  \label{eq:95}
\frac{dI_1(t)}{dt} 
& = -\frac{1}{2} \int_0^\ell y_{t}^2(x,t) \, dx 
- \frac{3}{2} \int_0^\ell y_{xx}^2(x,t) \, dx 
- \frac{T}{2} \int_0^\ell y_{x}^2(x,t) \, dx
\\ 
& \quad 
- \alpha \int_0^\ell x y_t(x,t-\tau) y_x(x,t) \, dx + \frac{\ell}{2} y_t^2(\ell,t) - \frac{\ell T}{2} y_{x}^2(\ell,t) \\
& \quad - \kappa \ell y_{x}(\ell,t) y_t(\ell,t).
\end{split}
\end{align}
Nevertheless, applying Young's inequality, we have:
\begin{eqnarray}
&\begin{aligned}
&-\alpha \int_0^\ell x y_t(x,t-\tau) y_x(x,t) \, dx
\\
& \qquad \qquad \leqslant
\frac{\varepsilon_1}{2} \int_{0}^{\ell} y_x^2(x,t) \, dx
+ \frac{\alpha^2}{2\varepsilon_1} \int_{0}^{\ell} x^2 y_t^2(x,t-\tau) \, dx 
\\
& \qquad \qquad \leqslant
\frac{\varepsilon_1}{2} \int_{0}^{\ell} y_x^2(x,t) \, dx
+ \frac{\big( \alpha \ell \big)^2}{2\varepsilon_1} \int_{0}^{\ell}  y_t^2(x,t-\tau) \, dx,
\end{aligned} \label{eq:97}
\\
&\begin{aligned}
- \kappa \ell y_{x}(\ell,t)  y_{t}(\ell,t)
& \leqslant
\frac{\varepsilon_2}{2} y_{x}^2(\ell,t)
+ \frac{\big( \kappa \ell \big)^2}{2 \varepsilon_2} y_{t}^2(\ell,t),
\end{aligned} \label{eq:98}
\end{eqnarray}
for all $\varepsilon_1, \, \varepsilon_2>0$. Since, we have after application of Cauchy-Schwarz inequality:
\begin{align}  \label{eq:99}
y_x^2(\ell,t) 
=
\left( \int_0^\ell y_{xx}(x,t) \, dx \right)^2
\leqslant
\ell \int_0^\ell y_{xx}^2(x,t) \, dx,
\end{align}
then the inequality \eqref{eq:98} becomes:
\begin{align}  \label{eq:100}
- \kappa \ell y_{x}(\ell,t)  y_{t}(\ell,t)
\leqslant
\frac{\varepsilon_2}{2} \ell \int_0^\ell y_{xx}^2(x,t) \, dx
+ \frac{\big( \kappa \ell \big)^2}{2 \varepsilon_2} y_{t}^2(\ell,t).
\end{align}
Finally, incorporating estimates \eqref{eq:97} and \eqref{eq:100} into \eqref{eq:95} leads to the desired result.
\end{proof}

\begin{lemma} \label{l5}
The time derivative of the functional $I_2(t)$, defined by \eqref{eq:83a}, satisfies the following estimate:
\begin{equation} \label{eq:101}
\begin{split}
\frac{d I_2(t)}{dt} 
& \leqslant \int_{0}^{\ell} y_t^2(x,t) \, dx - e^{-\tau}  \int_{0}^{\ell} \int_{t-\tau}^{t} y_t^2(x,s) \, ds \, dx
\\
& \quad - e^{-\tau} \int_{0}^{\ell} y_t^2(x,t-\tau) \, dx.
\end{split}
\end{equation}
\end{lemma}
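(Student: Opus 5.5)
The plan is to differentiate $I_2$ directly, using the Leibniz rule for an integral whose integrand and lower limit both depend on $t$. For fixed $x$, write
\[
\int_{t-\tau}^{t} e^{s-t} y_t^2(x,s)\,ds = e^{-t}\int_{t-\tau}^{t} e^{s} y_t^2(x,s)\,ds .
\]
Differentiating this product in $t$ gives
\[
\frac{d}{dt}\int_{t-\tau}^{t} e^{s-t} y_t^2(x,s)\,ds = -\int_{t-\tau}^{t} e^{s-t} y_t^2(x,s)\,ds + y_t^2(x,t) - e^{-\tau} y_t^2(x,t-\tau).
\]
Integrating over $x\in(0,\ell)$ then yields the exact identity
\[
\frac{dI_2(t)}{dt} = \int_0^\ell y_t^2(x,t)\,dx - I_2(t) - e^{-\tau}\int_0^\ell y_t^2(x,t-\tau)\,dx .
\]

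Next, I bound $-I_2(t)$ from above. For $s\in[t-\tau,t]$ we have $s-t\geqslant -\tau$, so $e^{s-t}\geqslant e^{-\tau}$. Hence
\[
I_2(t)\geqslant e^{-\tau}\int_0^\ell\int_{t-\tau}^{t} y_t^2(x,s)\,ds\,dx .
\]
Substituting this into the identity gives exactly \eqref{eq:101}.

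The only point needing care is justifying differentiation under the integral and the pointwise Leibniz rule. I will do the computation for classical solutions, i.e. $Y_0\in\mathscr{D}(\mathbb{A})$ as in Theorem \ref{th1}. There $y_t=u(\cdot,0,\cdot)$ is continuous in time with values in $H^2_E(0,\ell)\subset L^2(0,\ell)$, and the history $y_t(\cdot,s)=f_0(\cdot,s)$ for $s<0$ is handled through $u$. In that setting $t\mapsto \int_{t-\tau}^t e^{s}\lVert y_t(\cdot,s)\rVert^2_{L^2}\,ds$ is $C^1$ by the fundamental theorem of calculus. I expect no genuine obstacle beyond this regularity remark; the estimate then extends to mild solutions by density if needed.
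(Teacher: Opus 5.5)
Your proof is correct and follows the standard argument that the paper defers to in \cite{ammari2010, feng2021}: the Leibniz rule gives the exact identity $\frac{dI_2}{dt} = \int_0^\ell y_t^2(x,t)\,dx - I_2(t) - e^{-\tau}\int_0^\ell y_t^2(x,t-\tau)\,dx$, and the bound $e^{s-t}\geqslant e^{-\tau}$ on $[t-\tau,t]$ then converts $-I_2(t)$ into the middle term of \eqref{eq:101}. Your regularity remark is a useful addition, since the paper gives no proof of its own; it covers classical solutions, with the history on $[t-\tau,0)$ supplied by $f_0$ through $u$.
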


\begin{proof}
We refer the reader to \cite{ammari2010, feng2021} for a detailed proof of this result.
\end{proof}


Finally, we introduce  the Lyapunov functional $\mathscr{V}(t)$ defined by:
\begin{align}  \label{eq:104}
\mathscr{V}(t) := E(t) + \sum_{i=1}^{2} \delta_i I_i(t),
\end{align}
where $\delta_1, \delta_2 > 0$ are constants, and the functionals $E(t)$, $I_1(t)$, and $I_2(t)$ are given in \eqref{eq:73}, \eqref{eq:83}, and \eqref{eq:83a}.
To establish the exponential stability of system \eqref{eq:p1}, we first prove the equivalence between the energy $E(t)$ and the Lyapunov functional $\mathscr{V}(t)$:

\begin{proposition} \label{p}
For a sufficiently small $\delta_1 > 0$, the functional $\mathscr{V}(t)$ is equivalent to the energy $E(t)$. Specifically, there exist two positive constants $\gamma_1$ and $\gamma_2$ such that, for all $t \geqslant 0$,
\begin{equation} \label{eq:105}
\gamma_1 E(t) \leqslant \mathscr{V}(t) \leqslant \gamma_2 E(t).
\end{equation}
\end{proposition}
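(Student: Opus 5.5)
The plan is to compare each of the two perturbation terms $\delta_1 I_1(t)$ and $\delta_2 I_2(t)$ in \eqref{eq:104} directly with the energy $E(t)$ from \eqref{eq:73}. The estimate will come from Young's inequality, a Poincaré-type inequality on $H^2_E(0,\ell)$, and a change of variables in the delay integral. The aim is constants $C_1, C_2>0$, depending only on $\ell, T, \tau, \xi$, such that
\begin{equation*}
|I_1(t)| \leqslant C_1 E(t), \qquad 0 \leqslant I_2(t) \leqslant C_2 E(t), \qquad t \geqslant 0.
\end{equation*}
Granting these, we get
\begin{equation*}
(1-\delta_1 C_1) E(t) \leqslant E(t) - \delta_1 |I_1(t)| \leqslant \mathscr{V}(t) \leqslant (1+\delta_1 C_1 + \delta_2 C_2) E(t).
\end{equation*}
The lower bound uses only $I_2 \geqslant 0$. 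Choosing $\delta_1 < 1/C_1$ then gives \eqref{eq:105} with $\gamma_1 = 1-\delta_1 C_1$ and $\gamma_2 = 1+\delta_1 C_1+\delta_2 C_2$, for any $\delta_2>0$.

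\textbf{Bound on $I_1$.} Since $0\leqslant x\leqslant \ell$, Young's inequality gives
\begin{equation*}
|I_1(t)| \leqslant \frac{\ell}{2}\int_0^\ell y_t^2\,dx + \frac{\ell}{2}\int_0^\ell y_x^2\,dx .
\end{equation*}
For the second term there are two options. Because $T>0$, the simplest is to use $\int_0^\ell y_x^2\,dx \leqslant \frac{2}{T}\cdot\frac{T}{2}\int_0^\ell y_x^2\,dx$. Alternatively, $y_x(0,t)=0$ gives $y_x(x,t)=\int_0^x y_{xx}$, and Cauchy--Schwarz then gives $\int_0^\ell y_x^2 \leqslant \frac{\ell^2}{2}\int_0^\ell y_{xx}^2$; this version is uniform as $T\to 0$. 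Either way $|I_1(t)|\leqslant C_1E(t)$, for instance with $C_1=\ell\max\{1,\,\min\{1/T,\ell^2/2\}\}$.

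\textbf{Bound on $I_2$.} In \eqref{eq:83a} substitute $s=t-\sigma\tau$, $\sigma\in(0,1)$, so that $ds=\tau\,d\sigma$ and $e^{s-t}=e^{-\sigma\tau}\in[e^{-\tau},1]$. This yields
\begin{equation*}
0\leqslant I_2(t)=\tau\int_0^\ell\int_0^1 e^{-\sigma\tau}y_t^2(x,t-\sigma\tau)\,d\sigma\,dx\leqslant \tau\int_0^\ell\int_0^1 y_t^2(x,t-\sigma\tau)\,d\sigma\,dx\leqslant \frac{2\tau}{\xi}E(t),
\end{equation*}
so $C_2=2\tau/\xi$.

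No step is a genuine obstacle. The only points needing care are this rescaling of the delay integral, so that $I_2$ matches the $\xi$-weighted delay term in $E$ (which is exactly why $\delta_2$ needs no smallness condition), and tracking the explicit constant $C_1$ so that the smallness condition on $\delta_1$ is stated concretely. That explicit condition will be reused later when the region in the $(\alpha,\xi)$-plane is determined.
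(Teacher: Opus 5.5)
Your proof is correct and follows the same route as the paper: Young's inequality on $I_1$ with weight $\ell$ (your first option gives exactly the paper's constant $\max\{\ell,\ell/T\}$), and comparison of $I_2$ with the delay term of $E$. Your bookkeeping is actually cleaner than the paper's, which has two slips your version avoids: the paper's bound $I_2\leqslant \frac{2}{\xi}E(t)$ drops the factor $\tau$ produced by the change of variables, and its lower constant $\gamma_1 = 1-\delta_1\max\{\ell,\ell/T\}+\frac{2\delta_2}{\xi}$ does not follow from an upper bound on $I_2$; your use of $I_2\geqslant 0$ and $C_2 = 2\tau/\xi$ is right.
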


\begin{proof}
First, we estimate $|I_1(t)|$. Applying $\varepsilon$-Young's inequality on the integrand $x y_x y_t$ yields, for any $\varepsilon_3 > 0$:
\begin{align}  \label{eq:106}
|I_1(t)| 
& \leqslant
\frac{\varepsilon_3}{2} \int_{0}^{\ell} y_t^2(x,t) \, dx
+ \frac{\ell^2}{2\varepsilon_3} \int_{0}^{\ell} y_x^2(x,t) \, dx
 \nonumber \\
|I_1(t)|
& \leqslant \max \Big\{\varepsilon_3; \, \frac{\ell^2}{\varepsilon_3 T}\Big\} E(t).
\end{align}
Setting $\varepsilon_3 := \ell$, we obtain:
\begin{align}
|I_1(t)|
& \leqslant \max \Big\{\ell; \, \frac{\ell}{T}\Big\} E(t).
\end{align}
Next, the functional $I_2(t)$ satisfies the following  estimate:
\begin{gather}  \label{eq:107}
I_2
\leqslant 
\int_0^\ell \int_{t-\tau}^{t} y_t^2(x,s) \, ds \, dx
\leqslant
\frac{2}{\xi} E(t).
\end{gather}
Consequently, by the triangle inequality, we obtain the following two-sided estimate:
\begin{gather}  \label{eq:108}
 \begingroup  \everymath{\displaystyle}
 \begin{matrix}
 - \, \delta_1 \max \Big\{\ell; \, \frac{\ell}{T}\Big\} E(t)
 \\
+ \, \delta_2 \int_0^\ell \int_{t-\tau}^{t} y_t^2(x,s) \, ds \, dx
 \end{matrix}
 \endgroup
 \leqslant
 \sum_{i=1}^{2} \delta_i I_i(t)
 \leqslant
 \begingroup
 \everymath{\displaystyle}
 \begin{matrix}
 \delta_1 \max \Big\{\ell; \, \frac{\ell}{T}\Big\} E(t)
 \\
+ \, \delta_2 \int_0^\ell \int_{t-\tau}^{t} y_t^2(x,s) \, ds \, dx
 \end{matrix}
 \endgroup \nonumber
 \\
\Bigg(
1 - \delta_1 \max \Big\{\ell; \, \frac{\ell}{T}\Big\} + \frac{2 \delta_2}{\xi} 
\Bigg) E(t)
\leqslant \mathscr{V}(t) \leqslant
\Bigg(
1 + \delta_1 \max \Big\{\ell; \, \frac{\ell}{T}\Big\} + \frac{2 \delta_2}{\xi} 
\Bigg) E(t).
\end{gather}
Hence, choosing $\delta_1$ small enough such that $\delta_1 < \Big( \max \Big\{\ell; \, \dfrac{\ell}{T}\Big\} \Big)^{-1}$, the left-hand side coefficient $\displaystyle 1 - \delta_1 \max \Big\{\ell; \, \frac{\ell}{T}\Big\}$ remains positive and the result \eqref{eq:105} is thus established, with the positive constants $\gamma_1$ and $\gamma_2$ defined by:
\begin{gather}  \label{eq:109}
\gamma_1 := 1 - \delta_1 \max \Big\{\ell; \, \frac{\ell}{T}\Big\} + \frac{2 \delta_2}{\xi} 
\qquad 
\gamma_2 := 1 + \delta_1 \max \Big\{\ell; \, \frac{\ell}{T}\Big\} + \frac{2 \delta_2}{\xi}. 
\end{gather}
\end{proof}

With all the necessary ingredients in place, we are now ready to state the second main result regarding the exponential stability of system \eqref{eq:p1}.
\begin{theorem} \label{th2}
Let $\kappa > 0$ and assume that $T \in (0, 3/\ell^2)$. There exists a positive constant $\alpha_0$ such that, for any $\alpha$ satisfying $|\alpha| < \alpha_0$, system \eqref{eq:p1} is uniformly exponentially stable. Specifically, for any solution to \eqref{eq:p1}, there exist positive constants $\zeta_1$ and $\zeta_2$, independent of the initial data, such that
\begin{equation} \label{eq:109a}
E(t) \le \zeta_1 E(0) e^{-\zeta_2 t}, \quad \forall t \ge 0.
\end{equation}
\end{theorem}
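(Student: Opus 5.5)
The plan is the standard Lyapunov argument. Differentiate $\mathscr{V}(t)=E(t)+\delta_1 I_1(t)+\delta_2 I_2(t)$ along regular solutions, i.e. for $Y_0\in\mathscr{D}(\mathbb{A})$, using Theorem \ref{th1}. Combine the estimates of Lemmas \ref{l3}, \ref{l4} and \ref{l5}, and arrange the free constants so that $\frac{d}{dt}\mathscr{V}(t)\leqslant -\mu E(t)$ for some $\mu>0$. Proposition \ref{p} then gives $\frac{d}{dt}\mathscr{V}\leqslant -\frac{\mu}{\gamma_2}\mathscr{V}$. Gronwall yields $\mathscr{V}(t)\leqslant \mathscr{V}(0)e^{-\mu t/\gamma_2}$, hence \eqref{eq:109a} with $\zeta_1=\gamma_2/\gamma_1$ and $\zeta_2=\mu/\gamma_2$. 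Density of $\mathscr{D}(\mathbb{A})$ (Lemma \ref{l1}) and continuity of the semigroup extend the estimate to mild solutions.

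Summing the three lemmas gives
\begin{align*}
\frac{d\mathscr{V}}{dt}&\leqslant \Big(\frac{|\alpha|+\xi}{2}-\frac{\delta_1}{2}+\delta_2\Big)\int_0^\ell y_t^2\,dx
+\delta_1\frac{-3+T\ell^2+\varepsilon_2\ell}{2}\int_0^\ell y_{xx}^2\,dx
+\delta_1\frac{\varepsilon_1-T}{2}\int_0^\ell y_x^2\,dx\\
&\quad+\Big(\frac{|\alpha|-\xi}{2}+\frac{\delta_1(\alpha\ell)^2}{2\varepsilon_1}-\delta_2e^{-\tau}\Big)\int_0^\ell y_t^2(x,t-\tau)\,dx
-\delta_2e^{-\tau}\int_0^\ell\!\int_{t-\tau}^t y_t^2\,ds\,dx\\
&\quad+\Big(-\kappa+\delta_1\frac{\ell}{2}+\delta_1\frac{(\kappa\ell)^2}{2\varepsilon_2}\Big)y_t^2(\ell,t).
\end{align*}
The constants are chosen in the following order.

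\begin{enumerate}[label=(\roman*)]
\item Since $T<3/\ell^2$, pick $\varepsilon_2>0$ with $T\ell^2+\varepsilon_2\ell<3$, so the $y_{xx}^2$ coefficient is strictly negative.
\item Take $\varepsilon_1=T/2$ (any $\varepsilon_1<T$ works), so the $y_x^2$ coefficient is negative. This term could even be discarded, since $T\int y_x^2\leqslant T\ell^2\int y_{xx}^2$, but keeping it negative is simplest.
\item Choose $\delta_1$ small enough that the boundary coefficient $-\kappa+\delta_1(\ell/2+(\kappa\ell)^2/(2\varepsilon_2))$ is $\leqslant 0$. Also keep $\delta_1<(\max\{\ell,\ell/T\})^{-1}$, as Proposition \ref{p} requires.
\item With $\delta_1$ now fixed, the $y_t^2$ coefficient requires $|\alpha|+\xi+2\delta_2<\delta_1$. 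Choose, say, $\xi=\delta_2=\delta_1/8$. Then every $|\alpha|<\delta_1/2$ makes this coefficient negative.
\item The delayed term requires $|\alpha|+\delta_1\alpha^2\ell^2/\varepsilon_1\leqslant \xi+2\delta_2e^{-\tau}$. For the fixed $\xi,\delta_2>0$ the right-hand side is a positive constant and the left-hand side tends to $0$ as $\alpha\to0$. This gives a threshold $\alpha_1>0$. Set $\alpha_0:=\min\{\delta_1/2,\alpha_1\}$.
\end{enumerate}

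With these choices every term on the right is nonpositive. The terms in $\int y_t^2$, $\int y_{xx}^2$, $\int y_x^2$ and the history integral carry strictly negative coefficients. Since $\int_0^\ell\int_{t-\tau}^t y_t^2(x,s)\,ds\,dx=\tau\int_0^\ell\int_0^1 y_t^2(x,t-s\tau)\,ds\,dx$, the history term controls the delay part of $E$. Hence $\frac{d\mathscr{V}}{dt}\leqslant-\mu E(t)$ with $\mu$ a positive minimum of these coefficients (rescaled by $2$, $2/T$ and $2/(\xi\tau)$).

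The main obstacle is the coupling between $\xi$ and $\delta_1$. Making the $\int y_t^2$ coefficient negative forces $\xi<\delta_1$. Yet $\delta_1$ must be small because of the boundary term and Proposition \ref{p}, and the delayed term needs $\xi+2\delta_2e^{-\tau}$ to dominate $|\alpha|$. The point to verify is that fixing $\delta_1$ first, then $\xi,\delta_2$ proportional to $\delta_1$, and only last restricting $|\alpha|$ leaves a nonempty admissible set. It does, but it makes $\alpha_0$ depend on $\kappa,\ell,T,\tau$. I would also double-check that $\gamma_1>0$ survives, which it does because the $\delta_2$ contribution enters with a positive sign.
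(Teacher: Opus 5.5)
Your proposal is correct and follows the paper's proof: the same functional $\mathscr{V}=E+\delta_1 I_1+\delta_2 I_2$, the same summation of Lemmas \ref{l3}--\ref{l5}, and the same order of choices ($\varepsilon_1=T/2$ and $\varepsilon_2$ from $T\ell^2<3$, then $\delta_1$ from the boundary term and Proposition \ref{p}, then $\delta_2,\xi$, and finally a smallness threshold on $|\alpha|$). The differences are cosmetic: you fix $\xi=\delta_2=\delta_1/8$ independently of $\alpha$, whereas the paper's closing remark takes $\delta_2=\delta_1/4$ and $\xi=2|\alpha|$, so yours also covers $\alpha=0$; your lower bound $\gamma_1=1-\delta_1\max\{\ell,\ell/T\}$ is the right one, since $\delta_2 I_2\geq 0$ only helps; and one small slip to fix is that the history term controls the delay part of $E$ with factor $2\tau/\xi$, not $2/(\xi\tau)$, because $\int_{t-\tau}^{t}y_t^2(x,s)\,ds=\tau\int_0^1 y_t^2(x,t-s\tau)\,ds$.
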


\begin{proof}
In view of Lemmas \ref{l3}--\ref{l5}, the time derivative of the Lyapunov functional satisfies:
\begin{align*}
\frac{d \mathscr{V}(t)}{dt} 
& = \frac{d E(t)}{dt} + \sum_{i=1}^{2} \delta_i \frac{d I_i(t)}{dt} 
\\
\frac{d \mathscr{V}(t)}{dt}
& \leqslant
\bigg(
- \kappa + \delta_1 \left( \frac{\ell}{2} + \frac{(\kappa \ell)^2}{2 \varepsilon_2} \right)
\bigg) y^2_t(\ell,t)
+ \Big( 
\delta_2 - \frac{1}{2} \delta_1 + \frac{|\alpha| + \xi}{2} 
\Big)\int_{0}^{\ell} y_t^2(x,t) \, dx
\\
& \quad + \frac{- 3 + T\ell^2 + \ell \varepsilon_2}{2} \delta_1 
\int_{0}^{\ell} y_{xx}^2(x,t) \, dx
+ \frac{\varepsilon_1 - T}{2} \delta_1 \int_{0}^{\ell} y_{x}^2(x,t) \, dx
\\
& \quad + \left( 
-e^{-\tau} \delta_2 + \delta_1 \frac{(\alpha \ell)^2}{2 \varepsilon_1} 
+ \frac{|\alpha| - \xi}{2} 
\right)\int_{0}^{\ell} y_t^2(x,t-\tau) \, dx
\\
& \quad - e^{-\tau} \delta_2 \int_0^\ell \int_{t-\tau}^{t} y_t^2(x,s) \, ds \, dx.
\end{align*}
First, we set $\varepsilon_1 = \frac{T}{2}$ and $\varepsilon_2 = \frac{3 - T\ell^2}{2\ell}$, with $\displaystyle T \in \left(0; \, \frac{3}{\ell^2}\right)$. 
Next, we choose $\delta_i > 0, \, i = 1, 2$ sufficiently small such that:
\begin{gather} \label{eq:110}
\delta_1 \leqslant \kappa \left( 
\frac{\ell}{2} + \frac{\kappa^2 \ell^3}{3 - T \ell^2} \right)^{-1}, \quad 
\delta_2 < \frac{\delta_1}{2}.
\end{gather}
Under these choices, we obtain:
\begin{align} \label{eq:111}
\begin{split}
\frac{d \mathscr{V}(t)}{dt} 
& \leqslant
\bigg( -\kappa + \delta_1 \left( \frac{\ell}{2} + \frac{\kappa^2 \ell^3}{3 -	T\ell^2} \right) \bigg) y_t^2(\ell,t)
\\
& \quad
 + \bigg( \delta_2 - \frac{\delta_1}{2} + \frac{|\alpha| + \xi}{2} \bigg) \int_{0}^{\ell} y_t^2(x,t) \, dx
\\
& \quad - \frac{3 - T\ell^2}{4} \int_{0}^{\ell} y_{xx}^2(x,t) \, dx
- \frac{T}{4} \delta_1 \int_{0}^{\ell} y_{x}^2(x,t) \, dx
\\
& \quad - \bigg( e^{-\tau} \delta_2 - \delta_1 \frac{(\alpha \ell)^2}{T} - \frac{|\alpha| - \xi}{2} \bigg) 
\int_{0}^{\ell} y_t^2(x,t-\tau) \, dx
\\
& \quad
 - e^{-\tau} \delta_2 \int_{0}^{\ell} \int_{t-\tau}^{t} y_t^2(x,s) \, ds \, dx.
\end{split}
\end{align} 
We denote by $\Sigma$ the $(\alpha,\xi)-$region of exponential stability associated with system \eqref{eq:p1}, defined as:
\begin{gather}  \label{eq:112}
\Sigma := \left\{
(\alpha,\xi) \in \mathbb{R} \times \mathbb{R}_+^\ast:
\begin{array}{l}
\xi < \delta_1 - 2\delta_2 - |\alpha| \\
\xi \geqslant \dfrac{2 \delta_1 \ell^2}{T} \alpha^2 + |\alpha| - 2 e^{-\tau} \delta_2 
\end{array}
\right\}.
\end{gather}
For any pair $(\alpha,\xi) \in \Sigma$, there exists a constant $\nu > 0$ such that:
\begin{gather}  \label{eq:113}
\frac{d \mathscr{V}(t)}{dt} \leqslant -\nu E(t) \leqslant -\frac{\nu}{\gamma_2} \mathscr{V}(t).
\end{gather}
Specifically, $\nu$ is given by:
\begin{gather}  \label{eq:114}
\nu := 2 \min \left\{
\frac{T}{4} \delta_1; \, 
\frac{3-T\ell^2}{4}; \,
e^{-\tau} \xi^{-1} \delta_2; \, 
\frac{\delta_1}{2}  - \delta_2 - \frac{|\alpha| + \xi}{2}
\right\}.
\end{gather}
Hence, combining \eqref{eq:113} with the equivalence property \eqref{eq:105} of Proposition \ref{p} establishes \eqref{eq:109a}.
\end{proof}

\begin{remark} ~
\begin{enumerate}[label=(\roman*)]
\item As depicted in Figure \ref{fig}, the exponential stability region $\Sigma$ is non-empty for the considered system parameters.

\begin{figure}[ht]
\begin{center}
\begin{tikzpicture}
\begin{axis}[
width=0.49\textwidth,
scale only axis,
axis lines = middle,
xlabel = {$\alpha$},
xlabel style={yshift=-3ex},
ylabel = {$\xi$},
ylabel style={xshift=-3ex},
xtick = \empty,
ytick= \empty,
legend style = {
at={(1.2,1)},
anchor=north,
legend cell align={left},
}
]

\path[name path=X](axis cs:-0.7,0) -- (axis cs:0.7,0);
\path[name path=Y](axis cs:0,-0.5) -- (axis cs:0,5);

\addplot[
very thick,
domain=-0.7:0.7,
samples=100,
color=blue,
name path=A
]{(-1) * abs(x) + 1/3};

\addplot[
very thick,
domain=-0.4:0.4,
samples=100,
color=red,
name path=B
]{(8/3) * x^2 + abs(x) - (1/3)*exp(-1.5)};

\fill[
name intersections={of=X and A, by = {p1, p2}}
]
(p1) circle (2pt) node[below left]  {$-\delta_1+2\delta_2$}
(p2) circle (2pt) node[above right] {$\delta_1-2\delta_2$};

\fill[
name intersections={of=X and B, by = {p3, p4}}
]
(p3) circle (2pt) node[below left]  {$\alpha_2$}
(p4) circle (2pt) node[below right] {$\alpha_1$};

\fill[
name intersections={of=Y and A, by = {p5}}
]
(p5) circle (2pt) node[left] {$\delta_1-2\delta_2$};

\fill[
name intersections={of=Y and B, by = {p6}}
]
(p6) circle (2pt) node[below left]  {$-2 \delta_2 e^{-\tau}$};

\path[
name intersections={of=X and Y, by = {p7, p8}}
];

\addplot[
gray!65,
fill opacity = 0.85
]
fill between[
of=A and B,
soft clip={ 
(-0.1689,0) rectangle (0.1689,0.33)}
];


\fill[gray!65, opacity = 0.85
] (axis cs:-0.1689,0.1689)--(axis cs:-0.168,0.168)--(axis cs:-0.07,0.01)--(axis cs:-0.06,0)--(axis cs:0.06,0)--(axis cs:0,0.05)--cycle;


\addlegendentry{
$\xi= \delta_1 - 2\delta_2 - |\alpha|$
}
\addlegendentry{
$\xi= \frac{2 \delta_1 \ell^2}{T} \alpha^2 + |\alpha| - 2 e^{-\tau} \delta_2 $
}
\end{axis}
\end{tikzpicture}
\end{center}
\caption{The exponential stability region $\Sigma$}
\label{fig}
\end{figure}
\item The stability region $\Sigma$ is symmetric with respect to the $\xi$-axis, as its boundaries are defined by even functions. Furthermore, since the right-hand side of the second inequality in \eqref{eq:112} is quadratic with respect to $\alpha$, it yields two symmetric roots $\alpha_1, \alpha_2$ given by:
\begin{gather}  \label{eq:115}
\alpha_1 = \frac{T}{4 \delta_1 \ell^2} \left[ -1+ 
\left( 1 + \frac{16 \delta_1 \delta_2 e^{-\tau} \ell^2}{T}
\right)^{\frac{1}{2}}
\right],
\qquad \alpha_2 = - \alpha_1.
\end{gather}
\item Note that as $\kappa \to \infty$ or $\kappa \to 0$, the constants $\delta_1$ and $\delta_2$ vanish, which consequently implies that $\alpha$ tends to zero as well.
\end{enumerate}
\end{remark}

\begin{remark}
A relationship between $\alpha$ and $\kappa$ can be obtained by fixing $\xi$ and $\delta_i$. Given that $\delta_1$ and $\delta_2$ must satisfy the conditions \eqref{eq:110}, we adopt the following choices:
\begin{equation} \label{eq:116}
\begin{gathered}
\delta_1 = \min\left\{ \left( \max \Big\{ \ell; \frac{\ell}{T}
\Big\} \right)^{-1};  
\kappa \bigg( \frac{\ell}{2} + \frac{\kappa^2 \ell^3}{3 - T\ell^2}
\bigg)^{-1}
\right\},
\\ 
\delta_2 = \frac{\delta_1}{4} \; \text{ and } \; \xi = 2|\alpha|.
\end{gathered}
\end{equation}
Consequently, the estimate \eqref{eq:111} holds, and the stability region $\Sigma$ becomes:
\begin{gather}  \label{eq:117}
\Sigma := \left\{
\big(\alpha,2|\alpha|\big): \; 
|\alpha| < \frac{\delta_1}{6}, \; 
-\dfrac{2 \delta_1 \ell^2}{T} \alpha^2 + |\alpha| + \frac{1}{2} e^{-\tau} \delta_1 \geqslant 0
\right\}, \quad \alpha \in \mathbb{R}^\ast.
\end{gather}
Let $\Psi(\alpha) := -\frac{2 \delta_1 \ell^2}{T} \alpha^2 + |\alpha| + \frac{1}{2} e^{-\tau} \delta_1$, where the parameter $\delta_1$ is defined in \eqref{eq:116}. 
Since $\Psi$ is an even function, the equation $\Psi(\alpha) = 0$ yields two symmetric roots explicitly given by:
\begin{gather}  \label{eq:118}
\alpha_1^\ast 
= \frac{T}{4 \delta_1 \ell^2} \bigg[ 1 + \left( 1 + \frac{4 e^{-\tau} \delta_1^2 \ell^2}{T} 
\right)^{\frac{1}{2}} 
\bigg], \quad 
\alpha_2^\ast = - \alpha_1^\ast.
\end{gather}
The conditions \eqref{eq:110} are satisfied for all $|\alpha| < \alpha_0$, with $\alpha_0$ defined as:
\begin{gather}  \label{eq:119}
\alpha_0 = \min\left\{ \frac{\delta_1}{6}; \, \alpha_1^\ast 
\right\},
\end{gather}
in which $\delta_1$ is given by \eqref{eq:116}. Therefore, $\alpha_0$ depends exclusively on the geometry of the beam, the internal time delay $\tau$ and the feedback gain $\kappa$.
\end{remark}


\section{Conclusion}

In this paper, we have established the exponential stabilization of an Euler-Bernoulli beam subjected to a constant axial force and a fixed internal time delay, whose coefficient is sign-indefinite, under the action of a boundary velocity control.
Our approach first consisted in reformulating the system \eqref{eq:p1} as an abstract Cauchy problem within an appropriate Hilbert space, in order to prove its well-posedness via the L\"umer-Phillips Theorem. 
Subsequently, we constructed a Lyapunov functional based on the system's energy. Its decay along the trajectories yields an $(\alpha, \xi)$-region of exponential stability for system \eqref{eq:p1}, defined by the set $\Sigma$ and symmetric with respect to the $\xi-$axis.
For future research, we intend to extend these results to the case of time-varying delays. We will also explore the incorporation of an internal damping mechanism within the model to further enhance its exponential stability.

\bibliographystyle{unsrt}
\bibliography{main}

\end{document}